\def\@email#1#2{%
 \endgroup
 \patchcmd{\titleblock@produce}
  {\frontmatter@RRAPformat}
  {\frontmatter@RRAPformat{\produce@RRAP{*#1\href{mailto:#2}{#2}}}\frontmatter@RRAPformat}
  {}{}
}%
\newtheorem{proposition}{Proposition}
\renewcommand{\leq}{\leqslant}
\renewcommand{\geq}{\geqslant}
\def\*#1{\mathbf{#1}}
\DeclareMathOperator{\dv}{div}
\def\ve{\varepsilon}
\def\vk{\varkappa}
\def\vp{\varphi}
\begin{document}

\preprint{AIP/123-QED}

\title[Doubly Reduced Model for Dynamics of Heterogeneous Mixtures]{On a Doubly Reduced Model for Dynamics of Heterogeneous Mixtures\\of Stiffened Gases, its Regularizations and their Implementations}
\author{A. Zlotnik}
\altaffiliation[Also at ]{Keldysh Institute of Applied Mathematics, Miusskaya Sqr., 4, 125047 Moscow, Russia}
\author{T. Lomonosov}%
\altaffiliation[Also at ]{Keldysh Institute of Applied Mathematics, Miusskaya Sqr., 4, 125047 Moscow, Russia}
\email{azlotnik@hse.ru, tlomonosov@hse.ru}
\affiliation{
$^1$Higher School of Economics University, Pokrovskii Bd. 11, 109028 Moscow, Russia\\
}
%

\date{\today}

\begin{abstract}
We deal with the reduced four-equation model for the dynamics of heterogeneous compressible binary mixtures with the stiffened gas equations of state.
We study its further reduced form, with the excluded volume concentrations, and with a quadratic equation for the common pressure of the components; this form can be called a quasi-homogeneous  form.
We prove new properties of the equation, derive simple formulas for the squared speed of sound and present an alternative proof for a formula that relates it to the squared Wood speed of sound; also, a short derivation of the pressure balance equation is given.
For the first time, we introduce regularizations of the heterogeneous model (in the quasi-homogeneous form).
Previously, regularizations of such type were  developed only for the homogeneous mixtures of perfect polytropic gases, and it was unclear how to cover the case considered here.
In the 1D case, based on these regularizations, we construct new explicit two-level in time and symmetric three-point in space finite-difference schemes without limiters, and provide numerical results for various flows with shock waves.
\end{abstract}
\maketitle

\begin{quotation}
Dynamic problems for the heterogeneous binary mixtures of compressible gases and fluids are of great theoretical and practical interest.
For the purpose of their mathematical description, various models containing from four to seven partial differential equations were developed.
The most reduced of them is the four-equation model that describes one-velocity and one-temperature flows in which both velocity and temperature of all the components are the same, and the components also have a common pressure. The model has various important applications.
In the case of the widely used stiffened gas equations of state, this model was rather recently further reduced to contain the minimal amount (four) of the sought functions.
This doubly reduced model is especially convenient for the purpose of constructing numerical methods for computer simulation of flows.
In this paper, we answer some theoretical questions that arise in this model including the choice of physically correct pressure value, derivation of the compact formula for the speed of sound and its comparison with alternative formulas.
For the first time for this kind of models, we also introduce regularizations of those types that are well developed in the cases of the single-component gas and homogeneous mixtures.
This allows us to construct rather simple explicit two-level in time and symmetric three-point in space finite-difference schemes without limiters in the 1D case.
We confirm the efficiency of the approach by computer simulations of various mixture flows with shock waves.
\end{quotation}

\section{\label{sec:level1}Introduction}
A hierarchy of models was developed for dynamics of the heterogeneous binary or multicomponent mixtures of compressible gases and fluids, see \cite{FMM10,FL11,ZMWS22} and the references therein.
The most reduced of them is the four-equation model for binary mixtures (i.e., the model that contains four partial differential equations (PDEs)) for one-velocity and one-temperature flows in which both velocity and temperature of all the components are the same, and the components also have a common pressure.
Its further reduced form, with the excluded volume concentrations, and with a quadratic equation for the common pressure of the components, was suggested in \cite{LeMSN14} in the case of the widely used stiffened gas equations of state and can be called a quasi-homogeneous form.
The doubly reduced model is especially convenient for constructing new numerical methods for computer simulation of complicated flows with phase transitions, first see \cite{LeMSN14}.
Further development of such numerical approach was accomplished for binary mixtures in\cite{CBS17IJNMF} and multicomponent mixtures in\cite{CBS17CF}. An example with both components described by the stiffened gas equations of state was considered in\cite{ABR20}.
In the frame of this approach, recently the non-conservative residual distribution scheme was suggested and tested in\cite{BCPCA22}, a pressure-based diffuse interface method for low-Mach multiphase flows with mass transfer was developed in\cite{DSPB22} and a numerical relaxation techniques with the enlarged capabilities to describe heat and mass transfer processes was given in\cite{P22}.
A brief review of alternative methods for mixtures, with extended references, can be found in \cite{ZMWS22}.

\par  In Section II
of this paper, we first present and study the reduced four-equation heterogeneous model and
its further reduction to the quasi-homogeneous form.
We prove new properties of the equation for the common pressure including the correct choice of its physical root, derive two rather simple formulas for the squared speed of sound (with two different derivations for the main of them) and the balance PDE for the pressure.
We also give an alternative proof for a formula that relates the squared speed of sound to the well-known Wood one.
We also compare the derived formula with two other known expressions.
Recall that the speed of sound is used for both constructing various numerical methods for the problems in question and choosing the time step that guarantees stability of explicit methods.

\par In Section III,
for the first time, we construct the so-called quasi-gasdynamic (QGD) regularization and quasi-hydrodynamic (QHD) one (essentially, a simplified QGD regularization) for the heterogeneous model in the quasi-homogeneous form.
The regularizations of such type are well-developed and are applied to a number of practical problems for the single-component gas, see \cite{Ch04,E07} and a lot of subsequent papers.
Their extension to homogeneous binary mixtures of perfect polytropic gases was initiated in \cite{E07}.
For two-velocity and two-temperature binary mixtures, the approach was improved theoretically in\cite{EZCh14} and applied practically, in particular, in\cite{KKPP18}.
The QHD regularization for mixtures with the phase interactions was constructed in\cite{BS18} and practically implemented, in particular, in\cite{BZ21}.
Several regularizations and their discretizations for one-velocity and one-temperature homogeneous mixtures were sequentially constructed and tested  in\cite{EZSh19,ZFL22} and later, with different regularizing velocities,  in\cite{ESh22} and, also taking into account diffusion fluxes, in\cite{ZL23}.
In the latter case, some theoretical aspects have recently been studied in \cite{ZF22MMAS,ZL23}; they include the validity of the entropy balance PDEs with non-negative energy productions, the Petrovsky parabolicity of the system and $L^2$-dissipativity of its linearized version.
We emphasize that, for perfect polytropic gases, the four-equation homogeneous and heterogeneous models are equivalent. But this is not the case for the stiffened gases, and attempts to apply some simple modifications of the homogeneous model to the heterogeneous case mostly fail except for some particular cases\cite{ESh22}. Recall that the components occupy their own volume in the heterogeneous models and the same volume in the homogeneous ones.

To construct the first successful QGD regularization for the heterogeneous mixtures of the stiffened gases, we apply a procedure from \cite{Z12MM2,ZL23} to the above doubly reduced model.
For both the QGD- and QHD-regularizations, we provide the additional balance PDEs for the mass, kinetic and internal energies of the mixture. Moreover, in the QHD case, we derive the balance PDE for the mixture entropy with the non-negative entropy production.
Notice that, for the single-component gas,  other regularizations are also used for constructing numerical methods\cite{GPT16,FL_MM20,DS21}.

\par In Section IV,
to verify the constructed regularized systems of PDEs at least in the 1D case, we construct explicit two-level in time and symmetric three-point in space finite-difference schemes without limiters which are conservative in the mass of components and the momentum and total energy of the mixture.
We also derive the discrete balance equations for the mass, kinetic and internal energies of the mixture using the technique from \cite{Z12CMMP}.
Finally, Section V
is devoted to numerical experiments that are based on the constructed schemes.
We implement various known tests that concern flows in shock tubes from papers \cite{KLC14,LF11,CBS17IJNMF,YC13,LA12,ABR20}.
Also Appendix contains the proofs of all Propositions from Section II.

\section{A reduced system of PDEs for the dynamics of heterogeneous mixtures of stiffened gases and its further reduction}
\label{Section II}
The reduced four-equation system of PDEs for the heterogeneous one-velocity and one-temperature compressible binary mixture consists of the balance PDEs for the mass of components, total momentum and total energy
\begin{eqnarray}
\partial_t(\alpha_kr_k)+\dv(\alpha_kr_k\*u)=0,\ \ k=1,2,
\label{mass eq alpha heter}\\[1mm]
\partial_t(\rho\*u)+\dv(\rho\*u\otimes\*u)+\nabla p=\dv\Pi^{NS}+\rho\*f,
\label{moment eq heter}\\[1mm]
\partial_t(\tfrac12\rho|\*u|^2+\rho\ve)+\dv\big((\tfrac12\rho|\*u|^2+\rho\ve+p)\*u\big)
\nonumber\\[1mm]
=\dv(-\*q^F+\Pi^{NS}\*u)+\rho\*u\cdot\*f+Q,
\label{energy eq heter}
\end{eqnarray}
for example, see \cite{LeMSN14} in the case $\Pi^{NS}=0$, $\*f=0$ and $Q=0$.
Here the main sought functions are the density $r_k>0$ and the volume fraction $0<\alpha_k<1$ of the heterogeneous component, $k=1,2$, the common velocity $\*u$ and absolute temperature $\theta>0$ of the mixture.
These functions depend on $x=(x_1,\ldots,x_n)\in\Omega$ and $t\geq 0$, where $\Omega$ is a domain in $\mathbb{R}^n$, $n=1,2,3$.
Hereafter vector-functions are written in bold,
and the operators $\dv=\nabla\cdot$, $\nabla=(\partial_1,\ldots,\partial_n)$, $\partial_t=\partial/\partial t$ and $\partial_i=\partial/\partial x_i$ are involved.
The symbols $\otimes$ and $\cdot$ correspond to the tensor and scalar products of vectors, the tensor divergence is taken with respect to its first index, and, below,
$\langle\cdot\rangle$ means the summation over index $k=1,2$.

\par The following additional relations are used
\begin{eqnarray}
 \langle\alpha_k\rangle=1,\ \
 \rho=\langle\alpha_kr_k\rangle,\ \ \rho\ve=\langle\alpha_kr_k\ve_k(r_k,\theta)\rangle,
\label{rho rhoeps}\\[1mm]
 p=p_1(r_1,\theta)=p_2(r_2,\theta)>0,\ \
\label{eq for pk}
\end{eqnarray}
where $p_k(r_k,\theta)$ and $\ve_k=\ve_k(r_k,\theta)$ are the pressure and specific internal energy
of the $k$th component ($k=1,2$),
$\rho$ and $\ve$ are the density and specific internal energy of the mixture, and
$p$ is the common pressure of the components.
In particular, Eq. \eqref{eq for pk} means that the pressures $p_k$ of the components are equal to each other, and this is the additional algebraic equation to PDEs
\eqref{mass eq alpha heter}-\eqref{energy eq heter} and formula $\langle\alpha_k\rangle=1$
that is
required to define all the sought functions listed above.

More specifically, we apply \textit{the stiffened gas equations of state} in its 
well-known form
\begin{eqnarray}
 p_k(r_k,\theta)=R_kr_k\theta-p_{*k},\ \ \ve_k(r_k,\theta)=c_{Vk}\theta+\frac{p_{*k}}{r_k}+\ve_{0k},
\label{sg EOS}
\end{eqnarray}
where $R_k>0$, $c_{Vk}>0$, $p_{*k}\geq 0$ and $\ve_{0k}$ are given physical constants, $k=1,2$.
In addition, $R_k=(\gamma_k-1)c_{Vk}$, where $\gamma_k>1$ is the adiabatic exponent, and
let $c_{pk}=\gamma_k c_{Vk}$. Recall that the perfect polytropic case corresponds to $p_{*k}=\ve_{0k}=0$.

\par The classical Navier-Stokes viscosity tensor and the Fourier heat flux are given by the formulas
\begin{eqnarray}
 \begin{array}{c}\Pi^{NS}=\mu\big(\nabla\*u+(\nabla\*u)^T\big)+\big(\lambda-\tfrac23\mu\big)(\dv\*u)\mathbb{I},\\[1mm]
 -\*q^F=\vk\nabla\theta,
 \end{array}
\label{Pi NS q F}
\end{eqnarray}
where $\mu\geq 0$, $\lambda\geq 0$ and $\vk\geq 0$ are the total viscosity and heat conductivity coefficients
(which 
may depend on the sought functions),
$\nabla\*u=\{\partial_iu_j\}_{i,j=1}^n$ and $\mathbb{I}$ is the $n$-th order unit tensor. 
For $\mu=\lambda=0$ and $\vk=0$, these terms vanish.
Also $\*f$ and $Q\geq 0$ are the given density of body forces and intensity of the heat sources. In comparison with \cite{LeMSN14}, we omit the phase transfer terms here but add the Navier-Stokes ones.

\par We define the alternative density $\rho_k=\alpha_kr_k$ of the $k$-th component.
Equations of state \eqref{sg EOS} imply sequentially
\begin{eqnarray}
 \alpha_k(p_k+p_{*k})=R_k\rho_k\theta,
\label{sg EOS cons1}\\[1mm]
 \rho_k(\ve_k-\ve_{0k})=c_{Vk}\rho_k\theta+\alpha_kp_{*k}=c_{Vk}\rho_k\theta+\frac{R_k\rho_kp_{*k}}{p_k+p_{*k}}\theta.
\label{sg EOS cons2}
\end{eqnarray}
Using the equations $\langle\alpha_k\rangle=1$ and $p=p_k$, we get the formulas
\begin{eqnarray}
 p=R\rho\theta-\langle\alpha_kp_{*k}\rangle,
\label{sg EOS cons1a}\\[1mm]
 \rho(\ve-\ve_0)=\langle\rho_k(\ve_k-\ve_{0k})\rangle=c_V\rho\theta+\langle\alpha_kp_{*k}\rangle,
\label{sg EOS cons2a}\\[1mm]
 \rho(\ve-\ve_0)+p=\gamma c_V\rho\theta
\label{sg EOS cons2b}
\end{eqnarray}
that contain the functions-coefficients of the mixture such that
\begin{equation}
 \left.\rho\ve_0=\langle\rho_k\ve_{0k}\rangle,\,
 \rho R=\langle R_k\rho_k\rangle,\,
 \rho c_V=\langle c_{Vk}\rho_k\rangle,\,
 \gamma=\frac{R}{c_V}+1.\right.
\label{coeff of mixture}
\end{equation}

\par From equality \eqref{sg EOS cons1}, we find
\[
 \frac{\alpha_1}{\alpha_2}=g(p)\frac{\rho_1}{\rho_2},\ \ g(p):=\frac{p+p_{*2}}{p+p_{*1}}\frac{R_1}{R_2}.
\]
Thus, the volume fractions $\alpha_k$ can be expressed in terms of the corresponding mass ones $y_k=\rho_k/\rho$:
\begin{eqnarray}
 \alpha_1=\frac{g(p)y_1}{g(p)y_1+1-y_1},\ \ \alpha_2=\frac{y_2}{g(p)(1-y_2)+y_2}.
\label{vol fr mass fr}
\end{eqnarray}

\par Equalities \eqref{sg EOS cons1}-\eqref{sg EOS cons2} lead to the relations
\begin{eqnarray}
 \Big\langle\frac{R_k\rho_k}{p+p_{*k}}\Big\rangle\theta=1,
\label{sg EOS cons3}\\[1mm]
 \rho(\ve-\ve_0)
 =\Big(\rho c_V+\Big\langle\frac{R_k\rho_kp_{*k}}{p+p_{*k}}\Big\rangle\Big)\theta.
\label{sg EOS cons4}
\end{eqnarray}

\par Expressing $\theta$ from Eq. \eqref{sg EOS cons3}, inserting it in Eq. \eqref{sg EOS cons4} and dividing the result by $\rho c_V$, we derive the following rational equation for $p$ in dependence on $\rho_1,\rho_2$ and $\ve$:
\begin{eqnarray}
\Big\langle\frac{\sigma^{(k)}(\rho(\ve-\ve_0)-p_{*k})}{p+p_{*k}}\Big\rangle=1.
\label{orig eq for p}
\end{eqnarray}
Here the following relations hold
\begin{equation}
 \begin{array}{cc}
\displaystyle{\rho=\langle\rho_k\rangle,\ \ \sigma^{(k)}=\sigma^{(k)}(\rho_1,\rho_2)=\frac{R_k\rho_k}{c_V\rho}>0,}
\\[1mm]
\displaystyle{\langle\sigma^{(k)}\rangle=\frac{R}{c_V}
 =\gamma-1.}
 \end{array}
\label{rho sigmak}
\end{equation}

\par This rational equation is reduced to the quadratic equation
\begin{eqnarray}
 p^2-bp-c=0,
\label{quad eq for p}
\end{eqnarray}
with the coefficients
\begin{eqnarray}
 b=\langle\sigma^{(k)}(\rho(\ve-\ve_0)-p_{*k})-p_{*k}\rangle,
\label{coeff b}\\[1mm]
 c=\sigma^{(1)}(\rho(\ve-\ve_0)-p_{*1})p_{*2}
 \nonumber\\[1mm]
 +\sigma^{(2)}(\rho(\ve-\ve_0)-p_{*2})p_{*1}-p_{*1}p_{*2}
 \nonumber\\[1mm]
 =(\sigma^{(1)}p_{*2}+\sigma^{(2)}p_{*1})\rho(\ve-\ve_0)-\gamma p_{*1}p_{*2}.
\label{coeff c}
\end{eqnarray}
Let $d:=b^2+4c$ be its discriminant.
For $d>0$,  the quadratic Eq. \eqref{quad eq for p} has the roots
\begin{eqnarray} p_\pm=p_\pm(\rho_1,\rho_2,\rho\ve)
=\tfrac12(b\pm\sqrt{d}),\ \ p_-<p_+.
\label{p pm roots}
\end{eqnarray}
But for $p_{*1}p_{*2}\neq 0$ (this case arises in some applications, for example, see test G below), the property $d>0$ and the correct choice of the physical root are not obvious and are analyzed below.

\par Note that the transition from Eq. \eqref{orig eq for p} to \eqref{quad eq for p} is not completely equivalent.
For example, in the case $p_{*1}=p_{*2}=p_*$, the unique root of the first equation is $p=R\rho\theta-p_*$, but the second one has
an additional parasitic root $p=-p_*$.
Also, in the limit case where $\alpha_k=1$ and $\alpha_l=0$ (if $l\neq k$) at some point $(x,t)$, we have $\sigma^{(k)}=\gamma_k-1$ and $\sigma^{(l)}=0$, thus, Eq. \eqref{orig eq for p} for $p$ is reduced to
\begin{eqnarray*}
 p+p_{*k}=\sigma^{(k)}(\rho(\ve-\ve_0)-p_{*k})\nonumber\\[1mm]
 =(\gamma_k-1)(\rho_k(\ve_k-\ve_{0k})-p_{*k})=(\gamma_k-1)c_{Vk}\rho_k\theta,
\end{eqnarray*}
i.e., $p=R_k\rho_k\theta-p_{*k}$ that is natural.
In this case, the quadratic Eq. \eqref{quad eq for p} has the additional parasitic root $p=-p_{*l}<0$.
\begin{proposition}
\label{the main}
Let $\Delta_*:=p_{*2}-p_{*1}$.
The following formulas hold
\begin{eqnarray}
 b=p_+ +p_-,\ \ c=-p_+p_-\geq 0,
\label{form for b and c}
\end{eqnarray}
where
\begin{eqnarray}
\begin{array}{r}
 p_+=R\rho\theta-\langle\alpha_kp_{*k}\rangle>0,\\[1mm] p_-=-\Big(\alpha_1p_{*2}+\alpha_2p_{*1}+\frac{\alpha_1\alpha_2}{c_V\rho\theta}\Delta_*^2\Big)\leq 0.
 \end{array}
\label{form for pp and pm}
\end{eqnarray}

Consequently, $d>0$, thus, these $p_\pm$ and those given by formula \eqref{p pm roots} are the same.
\end{proposition}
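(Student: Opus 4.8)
The plan is to let the physical quantities $\theta$ and $\alpha_k$ carry the argument, and then invoke Vieta's formulas. At a physical state there are $\theta>0$ and $\alpha_k\in(0,1)$ with $\langle\alpha_k\rangle=1$ and $p=p_k$; from \eqref{sg EOS cons1} with $p_k=p$ one has $\alpha_k=R_k\rho_k\theta/(p+p_{*k})$, hence
\[
\sigma^{(k)}=\frac{R_k\rho_k}{c_V\rho}=\frac{\alpha_k(p+p_{*k})}{c_V\rho\theta},
\]
while \eqref{sg EOS cons2a} gives $E:=\rho(\ve-\ve_0)=c_V\rho\theta+\langle\alpha_kp_{*k}\rangle$ and \eqref{rho sigmak} gives $\langle\sigma^{(k)}\rangle=\gamma-1=R/c_V$. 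Since $p$ is a root of \eqref{quad eq for p} by the very derivation of that quadratic from \eqref{orig eq for p}, and $p=R\rho\theta-\langle\alpha_kp_{*k}\rangle$ by \eqref{sg EOS cons1a}, this $p$ is the candidate for $p_+$; by Vieta's formulas the second root of $p^2-bp-c=0$ is then $b-p_+$, so it suffices to compute $b$ in closed form and subtract.

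First I would rewrite \eqref{coeff b} as $b=E\langle\sigma^{(k)}\rangle-\langle\sigma^{(k)}p_{*k}\rangle-\langle p_{*k}\rangle$ and substitute the expressions above, also using
\[
\langle\sigma^{(k)}\rangle=\frac{\langle\alpha_k(p+p_{*k})\rangle}{c_V\rho\theta}=\frac{p+\langle\alpha_kp_{*k}\rangle}{c_V\rho\theta}
\]
(here $\langle\alpha_k\rangle=1$). Then the two resulting terms $p\langle\alpha_kp_{*k}\rangle/(c_V\rho\theta)$ cancel and one is left with
\[
b=R\rho\theta+\frac{\langle\alpha_kp_{*k}\rangle^{2}-\alpha_1p_{*1}^{2}-\alpha_2p_{*2}^{2}}{c_V\rho\theta}-\langle p_{*k}\rangle.
\]
The single identity that is genuinely needed is the square completion, valid because $\alpha_1+\alpha_2=1$:
\[
\langle\alpha_kp_{*k}\rangle^{2}-\alpha_1p_{*1}^{2}-\alpha_2p_{*2}^{2}=-\alpha_1\alpha_2(p_{*1}-p_{*2})^{2}=-\alpha_1\alpha_2\Delta_*^2 ,
\]
so that $b=R\rho\theta-\dfrac{\alpha_1\alpha_2}{c_V\rho\theta}\Delta_*^2-(p_{*1}+p_{*2})$.

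Now set $p_+:=R\rho\theta-\langle\alpha_kp_{*k}\rangle=R\rho\theta-\alpha_1p_{*1}-\alpha_2p_{*2}$ and $p_-:=b-p_+$. Subtracting and using $1-\alpha_k=\alpha_l$ for $l\neq k$ gives at once
\[
p_-=-\Big(\alpha_1p_{*2}+\alpha_2p_{*1}+\tfrac{\alpha_1\alpha_2}{c_V\rho\theta}\Delta_*^2\Big),
\]
which is the second line of \eqref{form for pp and pm}; the Vieta relations $p_++p_-=b$ and $p_+p_-=-c$ are then exactly \eqref{form for b and c}. It remains only to read off signs: $p_+=p>0$ by \eqref{eq for pk}, whereas $p_-\leq 0$ is immediate since $\alpha_k\geq0$, $p_{*k}\geq0$, $\Delta_*^2\geq0$ and $c_V\rho\theta>0$. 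Hence $c=-p_+p_-\geq0$ and $p_+-p_-\geq p_+>0$, so $d=b^2+4c=(p_+-p_-)^2>0$; consequently the $p_\pm$ above coincide with $\tfrac12(b\pm\sqrt{d})$ from \eqref{p pm roots}, with the strict ordering $p_-<p_+$.

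I expect the only real obstacle to be the algebraic bookkeeping in the closed-form evaluation of $b$ — in particular, organizing the substitutions so that the terms linear in $p$ drop out, and recognizing that the surviving numerator is precisely the square-completion combination above; once that is in hand, everything else is a line or two.
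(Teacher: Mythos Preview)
Your proof is correct and largely parallels the paper's: both express $\sigma^{(k)}=\alpha_k(p+p_{*k})/(c_V\rho\theta)$ and $\rho(\ve-\ve_0)=c_V\rho\theta+\langle\alpha_kp_{*k}\rangle$, substitute into the formula for $b$, and invoke the same square-completion identity $\langle\alpha_kp_{*k}\rangle^2-\langle\alpha_kp_{*k}^2\rangle=-\alpha_1\alpha_2\Delta_*^2$ to obtain $b=p_++p_-$.

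The one genuine difference is in the treatment of $c$. The paper computes $c$ from scratch: it first rewrites $\rho(\ve-\ve_0)-p_{*k}=c_V\rho\theta\pm\alpha_l\Delta_*$, then expands the definition \eqref{coeff c} term by term using the auxiliary identity $(\sigma^{(1)}p_{*2}+\sigma^{(2)}p_{*1})c_V\rho\theta=p_+(\alpha_1p_{*2}+\alpha_2p_{*1})+p_{*1}p_{*2}$ (which is recorded separately because it is reused later in the paper), and only then recognizes the result as $-p_+p_-$. You instead observe that the physical pressure $p=R\rho\theta-\langle\alpha_kp_{*k}\rangle$ is already known to satisfy the quadratic \eqref{quad eq for p} (since it satisfies the rational equation \eqref{orig eq for p} from which the quadratic was derived), identify it with $p_+$, and read off $p_-=b-p_+$ and $c=-p_+p_-$ directly from Vieta's formulas. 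This is shorter and avoids a page of algebra; the paper's direct computation of $c$, on the other hand, yields the intermediate identity \eqref{si 1 p2 +si 2 p1} as a byproduct, which it needs again in the proofs of Propositions~\ref{prop:speed of sound} and~\ref{prop: two speeds of sound}.
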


\par Recall that the proofs of all Propositions in this Section are put in Appendix.

\par This Proposition guarantees that $p_+$ is the physical root and $p_-$ is the parasitic one.
Notice that the found formula for $p_-$ is also of interest since it allows to prove additional results, see Propositions \ref{prop: two speeds of sound} and \ref{prop: 3 sq speeds of sound} below.
\begin{proposition}
\label{lem: d greater 0}
The following formula holds
\begin{eqnarray}
d=(b_1-b_2)^2+4a_1a_2
\label{second form for d}
\end{eqnarray}
with $b_k=a_k-p_{*k}$ and $a_k=\sigma^{(k)}(\rho(\ve-\ve_0)-p_{*k})$, $k=1,2$, see\cite{LeMSN14}, and also
\begin{eqnarray}
d=\big[(\alpha_2\sigma^{(1)}-\alpha_1\sigma^{(2)})^2+2(\alpha_1\sigma^{(2)}+\alpha_2\sigma^{(1)})+1\big]\Delta_*^2
\nonumber\\[1mm]
+2c_V\rho\theta\big[(\alpha_2\sigma^{(1)}-\alpha_1\sigma^{(2)})(\gamma-1)+\sigma^{(1)}-\sigma^{(2)}\big]\Delta_*
\nonumber\\[1mm]
+((\gamma-1)c_V\rho\theta)^2>0,
\label{third form for d}
\end{eqnarray}
where $d$ is represented as
a quadratic polynomial with respect to $\Delta_*=p_{*2}-p_{*1}$.
\end{proposition}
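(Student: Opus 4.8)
The first representation \eqref{second form for d} is purely algebraic: it only rearranges $b^2+4c$ using the definitions $b_k=a_k-p_{*k}$, $a_k=\sigma^{(k)}(\rho(\ve-\ve_0)-p_{*k})$. Reading $b$ and $c$ off \eqref{coeff b}--\eqref{coeff c}, one has $b=\langle a_k-p_{*k}\rangle=b_1+b_2$ and $c=a_1p_{*2}+a_2p_{*1}-p_{*1}p_{*2}$. Since
$a_1a_2-b_1b_2=a_1a_2-(a_1-p_{*1})(a_2-p_{*2})=a_1p_{*2}+a_2p_{*1}-p_{*1}p_{*2}=c$,
it follows that $d=(b_1+b_2)^2+4(a_1a_2-b_1b_2)=(b_1-b_2)^2+4a_1a_2$. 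I would present this in two lines.

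For the second representation \eqref{third form for d}, the one substantive step is to split off the ``common part'' of $p_{*1},p_{*2}$ so that only $\Delta_*$ survives. By \eqref{sg EOS cons2a} and $\langle\alpha_k\rangle=1$ we have $\rho(\ve-\ve_0)=c_V\rho\theta+\langle\alpha_kp_{*k}\rangle$, whence $\rho(\ve-\ve_0)-p_{*1}=c_V\rho\theta+\alpha_2\Delta_*$ and $\rho(\ve-\ve_0)-p_{*2}=c_V\rho\theta-\alpha_1\Delta_*$, so that
$a_1=\sigma^{(1)}(c_V\rho\theta+\alpha_2\Delta_*)$, $a_2=\sigma^{(2)}(c_V\rho\theta-\alpha_1\Delta_*)$, and $b_1-b_2=a_1-a_2+\Delta_*$.
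Inserting these into \eqref{second form for d} and expanding produces $d$ as a quadratic polynomial in $\Delta_*$ whose coefficients involve only $\sigma^{(1)},\sigma^{(2)},\alpha_1,\alpha_2$ and $c_V\rho\theta$. Collecting the three powers of $\Delta_*$ and simplifying with $\langle\sigma^{(k)}\rangle=\gamma-1$ from \eqref{rho sigmak} and $\langle\alpha_k\rangle=1$ gives exactly \eqref{third form for d}: the $\Delta_*^0$-coefficient reduces to $((\gamma-1)c_V\rho\theta)^2$ via $(\sigma^{(1)}-\sigma^{(2)})^2+4\sigma^{(1)}\sigma^{(2)}=(\sigma^{(1)}+\sigma^{(2)})^2$; the $\Delta_*^2$-coefficient via $(\sigma^{(1)}\alpha_2+\sigma^{(2)}\alpha_1)^2-4\sigma^{(1)}\sigma^{(2)}\alpha_1\alpha_2=(\sigma^{(1)}\alpha_2-\sigma^{(2)}\alpha_1)^2$; and the $\Delta_*^1$-coefficient after factoring $\sigma^{(1)}\alpha_2(\sigma^{(1)}+\sigma^{(2)})-\sigma^{(2)}\alpha_1(\sigma^{(1)}+\sigma^{(2)})=(\gamma-1)(\sigma^{(1)}\alpha_2-\sigma^{(2)}\alpha_1)$.

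The strict inequality $d>0$ requires nothing new: by \eqref{p pm roots}, $p_+-p_-=\sqrt d$, hence $d=(p_+-p_-)^2$, and Proposition \ref{the main} gives $p_+>0\geq p_-$, so $d>0$; I would simply cite it. I note that an alternative route to \eqref{third form for d} starts directly from $d=(p_+-p_-)^2$ and the formulas \eqref{form for pp and pm}, which yield $p_+-p_-=(\gamma-1)c_V\rho\theta+(\alpha_1-\alpha_2)\Delta_*+\tfrac{\alpha_1\alpha_2}{c_V\rho\theta}\Delta_*^2$; this looks quartic, and to collapse it one must additionally use \eqref{sg EOS cons1} with $\sigma^{(k)}=R_k\rho_k/(c_V\rho)$ in the form $\tfrac{\alpha_1\alpha_2}{c_V\rho\theta}\Delta_*^2=\big((\gamma-1)\alpha_1-\sigma^{(1)}\big)\Delta_*$, after which $p_+-p_-$ becomes linear in $\Delta_*$ and squaring recovers \eqref{third form for d}. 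I find the route through \eqref{second form for d} cleaner and would use it. I do not anticipate a real obstacle; the computation is routine bookkeeping, and the only thing to get right is the substitution $\rho(\ve-\ve_0)-p_{*1}=c_V\rho\theta+\alpha_2\Delta_*$, $\rho(\ve-\ve_0)-p_{*2}=c_V\rho\theta-\alpha_1\Delta_*$ coming from \eqref{sg EOS cons2a}.
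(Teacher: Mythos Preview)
Your derivations of \eqref{second form for d} and \eqref{third form for d} are correct and follow the paper's route essentially verbatim: the same identity $a_1a_2-b_1b_2=c$ for the first formula, and the same substitutions $\rho(\ve-\ve_0)-p_{*1}=c_V\rho\theta+\alpha_2\Delta_*$, $\rho(\ve-\ve_0)-p_{*2}=c_V\rho\theta-\alpha_1\Delta_*$ followed by expansion and collection in powers of $\Delta_*$ for the second.

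The one genuine difference is how you obtain $d>0$. You invoke Proposition~\ref{the main} ($p_+>0\geq p_-$, hence $d=(p_+-p_-)^2>0$). The paper instead treats \eqref{third form for d} as a quadratic in $\Delta_*$ with positive leading coefficient and computes its discriminant $d_0$, showing $d_0=-4\sigma^{(1)}\sigma^{(2)}(\sigma^{(1)}+\sigma^{(2)}+1)\cdot 4(c_V\rho\theta)^2<0$, so the quadratic never vanishes. Your argument is shorter and entirely valid, but it makes Proposition~\ref{lem: d greater 0} logically dependent on Proposition~\ref{the main}; the paper's discriminant computation is longer but yields an \emph{independent} proof of $d>0$ (the paper explicitly flags this: ``This proves the property $d>0$ once more independently of Proposition~\ref{the main}''). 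If that independence is not a goal, your citation is the cleaner choice; if it is, you should carry out the discriminant calculation instead.
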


\par Note that we have
$\alpha_2\sigma^{(1)}-\alpha_1\sigma^{(2)}=-\alpha_1\alpha_2\Delta_*/(c_V\rho\theta)$ in \eqref{third form for d} (due to formula \eqref{form for sigma k}, see below).

\par Also $d_0=0$ is equivalent to $\rho_1\rho_2=0$.
For example, if $\rho_1=0$, then
$\gamma-1=\sigma^{(2)}=\gamma_2-1$, $d=[(\alpha_1\sigma^{(2)}+1)\Delta_*-\sigma^{(2)}c_V\rho\theta]^2$ and $\sigma^{(2)}c_V\rho=R_2\rho_2$, thus $d=0$ means that $R_2\rho_2\theta=(\alpha_1(\gamma_2-1)+1)\Delta_*$; the latter is impossible for $\alpha_1=0$ and $p_2>0$.

\par The additional balance PDEs for the mass, kinetic and internal energies of the mixture
\begin{eqnarray}
 \partial_t\rho + \dv(\rho\*u)=0,
\label{sum mass homog}\\[1mm] \tfrac12\partial_t(\rho|\*u|^2)+\tfrac12\dv\big(\rho|\*u|^2\*u\big)+\*u\cdot\nabla p\nonumber\\[1mm]
=(\dv\Pi^{NS})\cdot\*u+\rho\*f\cdot\*u,
\label{kin en homog}\\[1mm]
\partial_t(\rho\ve)+\dv(\rho\ve\*u)+p\dv\*u
\nonumber\\[1mm]
=\dv(-\*q^F)+\Pi^{NS}:\nabla\*u+Q
\label{int en homog}
\end{eqnarray}
are sequentially derived in a standard manner.
Here $:$ denotes the scalar product of tensors.
In particular, Eq. \eqref{sum mass homog} arises by applying $\langle\cdot\rangle$ to Eqs. \eqref{mass eq alpha heter}.
\begin{proposition}
\label{prop:speed of sound}
The following formula for the squared speed of sound and the balance PDE for $p_+$ hold
\begin{eqnarray}
c_s^2:=\partial_\rho p_++\frac{p_+}{\rho^2}\partial_\ve p_+\nonumber\\[1mm]
=\frac{\gamma(p_+ +p_{*1})(p_+ +p_{*2})}{\rho\sqrt{d}}>0,
\label{for speed of sound}\\
\partial_tp_++\*u\cdot\nabla p_+ +\rho c_s^2\dv\*u\nonumber\\[1mm]
 =\frac{c_s^2}{\gamma c_V\theta}\big(\dv(-\*q^F)+\Pi^{NS}:\nabla\*u+Q\big),
\label{eq for p}
\end{eqnarray}
where the derivatives $\partial_\rho$ and $\partial_\ve$ are taken
in assumption that $\ve_0$ and $\sigma^{(k)}$, $k=1,2$, are constant in \eqref{coeff b}-\eqref{coeff c} following \cite{FMM10,LeMSN14}.
\end{proposition}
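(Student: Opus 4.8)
The plan is to use the fact that, once $\ve_0$ and the $\sigma^{(k)}$ (hence $\gamma$) are frozen as prescribed, the coefficients \eqref{coeff b}--\eqref{coeff c} become \emph{affine} in the single quantity $E:=\rho(\ve-\ve_0)$, namely $b=(\gamma-1)E-B_0$ and $c=\kappa E-\gamma p_{*1}p_{*2}$, where $B_0:=\langle\sigma^{(k)}p_{*k}\rangle+p_{*1}+p_{*2}$ and $\kappa:=\sigma^{(1)}p_{*2}+\sigma^{(2)}p_{*1}$ (using $\langle\sigma^{(k)}\rangle=\gamma-1$). Thus $p_+$ is a function of $E$ alone, $p_+=\Phi(E)$, which is smooth since $d>0$ by Proposition~\ref{lem: d greater 0} (for $\rho_1\rho_2\neq0$). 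Differentiating $\Phi^2-b\Phi-c=0$ in $E$ gives $(2\Phi-b)\Phi'=(\gamma-1)\Phi+\kappa$; since $p_+=\tfrac12(b+\sqrt d)$ we have $2\Phi-b=\sqrt d$, hence $\Phi'(E)=\big((\gamma-1)p_++\kappa\big)/\sqrt d$. Then the chain rule, with $E=\rho\ve-\rho\ve_0$, yields $\partial_\rho p_+=(\ve-\ve_0)\Phi'$ and $\partial_\ve p_+=\rho\Phi'$, so that $c_s^2=\Phi'\big((\ve-\ve_0)+p_+/\rho\big)=\dfrac{\Phi'}{\rho}(E+p_+)$.

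Next I would reduce $(E+p_+)\big((\gamma-1)p_++\kappa\big)$ to a product of elementary factors. By \eqref{sg EOS cons2b} one has $E+p_+=\gamma c_V\rho\theta$. For the second factor, substituting $\gamma-1=R/c_V=(R_1\rho_1+R_2\rho_2)/(c_V\rho)$ and $\sigma^{(k)}=R_k\rho_k/(c_V\rho)$ and regrouping gives $(\gamma-1)p_++\kappa=\big(R_1\rho_1(p_++p_{*2})+R_2\rho_2(p_++p_{*1})\big)/(c_V\rho)$; now $p_++p_{*k}=R_kr_k\theta$ (from \eqref{sg EOS} and $p=p_k=p_+$), together with $\rho_k=\alpha_kr_k$ and $\langle\alpha_k\rangle=1$, collapses this to $(\gamma-1)p_++\kappa=R_1R_2r_1r_2\theta/(c_V\rho)$. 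Multiplying the two factors, $(E+p_+)\big((\gamma-1)p_++\kappa\big)=\gamma R_1R_2r_1r_2\theta^2=\gamma(p_++p_{*1})(p_++p_{*2})$, whence $c_s^2=\gamma(p_++p_{*1})(p_++p_{*2})/(\rho\sqrt d)$; positivity then follows at once since $\gamma>1$, $p_++p_{*k}=R_kr_k\theta>0$ and $d>0$. I expect this elementary but somewhat fiddly identity to be the one genuine obstacle; everything else is routine bookkeeping.

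For the balance PDE I would pass to the material derivative $D_t:=\partial_t+\*u\cdot\nabla$ and set $\mathcal Q:=\dv(-\*q^F)+\Pi^{NS}:\nabla\*u+Q$. The component mass PDEs \eqref{mass eq alpha heter} give $D_t\rho_k=-\rho_k\dv\*u$, hence $D_t(R_k\rho_k)=-R_k\rho_k\dv\*u$, $D_t(c_V\rho)=-c_V\rho\dv\*u$ and $D_t(\rho\ve_0)=-\rho\ve_0\dv\*u$; consequently $D_t\sigma^{(k)}=0$, and therefore $D_t(\gamma-1)=D_t\kappa=0$, that is, only $E$ varies inside $b$ and $c$ along the flow. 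Combining the internal-energy balance \eqref{int en homog} with the mass balance \eqref{sum mass homog} yields $D_t(\rho\ve)=-(\rho\ve+p_+)\dv\*u+\mathcal Q$, so $D_tE=D_t(\rho\ve)-D_t(\rho\ve_0)=-(E+p_+)\dv\*u+\mathcal Q=-\gamma c_V\rho\theta\,\dv\*u+\mathcal Q$. Hence $D_tp_+=\Phi'\,D_tE$, and inserting $\Phi'=\rho c_s^2/(E+p_+)=c_s^2/(\gamma c_V\theta)$ from the previous step we obtain $D_tp_+=-\rho c_s^2\dv\*u+\dfrac{c_s^2}{\gamma c_V\theta}\mathcal Q$, which is exactly \eqref{eq for p}.
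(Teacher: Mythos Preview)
Your proof is correct and follows essentially the same route as the paper: both differentiate the quadratic equation \eqref{quad eq for p} with $\ve_0$ and $\sigma^{(k)}$ frozen, arrive at the identical intermediate expression $c_s^2=\dfrac{(\gamma-1)p_++\sigma^{(1)}p_{*2}+\sigma^{(2)}p_{*1}}{\sqrt d}\cdot\dfrac{\rho(\ve-\ve_0)+p_+}{\rho}$, and then derive the pressure balance by applying the material derivative and using $D_t\sigma^{(k)}=0$ together with the internal-energy balance \eqref{int en homog}. Your packaging via $p_+=\Phi(E)$ is a clean way to organize the same computation; the only cosmetic difference is in the final simplification, where the paper factors the numerator algebraically via \eqref{form for sigma k}--\eqref{si 1 p2 +si 2 p1} and $(p_++\langle\alpha_kp_{*k}\rangle)p_++p_+(\alpha_1p_{*2}+\alpha_2p_{*1})+p_{*1}p_{*2}=(p_++p_{*1})(p_++p_{*2})$, whereas you reach the same product through the physical identities $p_++p_{*k}=R_kr_k\theta$ and $\alpha_1+\alpha_2=1$.
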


\par Formula \eqref{for speed of sound} is much more compact than the original one given in \cite{LeMSN14}.

\par We also give the representation of the differential of $p_+$ and, consequently, another derivation of the formula for $c_s^2$ following \cite[Section 3.2.3 and Proposition 11]{FMM10} and \cite{A99}.
\begin{proposition}
\label{prop:speed of sound A}
The differential of $p_+$ can be expressed in terms of the differentials of $\rho_1$, $\rho_2$ and $\rho\ve$ as follows
\begin{eqnarray}
dp_+=\langle\mathcal{P}_kd\rho_k\rangle
+\mathcal{P} d(\rho\ve),
\label{form for dp}
\end{eqnarray}
where the functions $\mathcal{P}_k$ and $\mathcal{P}$ are given by the formulas
\begin{eqnarray*}
\sqrt{d}\mathcal{P}_k
=(-1)^k\mathcal{R}\rho_k-\sqrt{d}\mathcal{P}\ve_{0k},
\\
\mathcal{R}=\frac{R_1c_{V2}\mathcal{H}_1-R_2c_{V1}\mathcal{H}_2}{(c_V\rho)^2},
\\
\mathcal{H}_k=(\rho(\ve-\ve_0)-p_{*k})(p_+ +p_{*(3-k)}),
\\
\sqrt{d}\mathcal{P}=(\gamma-1)p_+
+\sigma^{(1)}p_{*2}+\sigma^{(2)}p_{*1},
\end{eqnarray*}
with $k=1,2$ and $d$ being the discriminant of Eq. \eqref{quad eq for p}.
\par Consequently, the following
Abgrall-type formula for the squared speed of sound holds
\begin{eqnarray*}
c_s^2=\Big\langle\frac{\rho_k}{\rho}\mathcal{P}_k\Big\rangle
+\frac{\rho\ve+p_+}{\rho}\mathcal{P}
=\frac{\gamma(p_+ +p_{*1})(p_+ +p_{*2})}{\rho\sqrt{d}}.
\end{eqnarray*}
\end{proposition}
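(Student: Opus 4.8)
The plan is to obtain the differential formula \eqref{form for dp} by implicit differentiation of the quadratic equation \eqref{quad eq for p}, regarding $\rho_1,\rho_2,\rho\ve$ as the independent variables; then $\sigma^{(k)},\gamma,c_V$ and $\ve_0$ are functions of $\rho_1,\rho_2$ through \eqref{rho sigmak}, \eqref{coeff of mixture}, whereas the constants $p_{*k}$ are fixed. Since $p_+$ satisfies $p_+^2-bp_+-c=0$ with $2p_+-b=\sqrt d$, one gets $\sqrt d\,dp_+=p_+\,db+dc$. Writing $b=a_1+a_2-p_{*1}-p_{*2}$ and $c=a_1p_{*2}+a_2p_{*1}-p_{*1}p_{*2}$ with $a_k=\sigma^{(k)}(\rho(\ve-\ve_0)-p_{*k})$ — the decomposition already used in Proposition \ref{lem: d greater 0} — this collapses to the compact identity $p_+\,db+dc=(p_++p_{*2})\,da_1+(p_++p_{*1})\,da_2$.

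Next I would expand $da_k=(\rho(\ve-\ve_0)-p_{*k})\,d\sigma^{(k)}+\sigma^{(k)}\,d(\rho(\ve-\ve_0))$. The two terms carrying $d(\rho(\ve-\ve_0))$ combine, thanks to $\langle\sigma^{(k)}\rangle=\gamma-1$ from \eqref{rho sigmak}, into the coefficient $(\gamma-1)p_++\sigma^{(1)}p_{*2}+\sigma^{(2)}p_{*1}=\sqrt d\,\mathcal P$; the two terms carrying $d\sigma^{(k)}$ become $\mathcal H_1\,d\sigma^{(1)}+\mathcal H_2\,d\sigma^{(2)}$ with $\mathcal H_k$ exactly as in the statement. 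It then remains to re-express everything through $d\rho_1,d\rho_2,d(\rho\ve)$: from $\rho\ve_0=\langle\rho_k\ve_{0k}\rangle$ one has $d(\rho(\ve-\ve_0))=d(\rho\ve)-\ve_{01}\,d\rho_1-\ve_{02}\,d\rho_2$, and differentiating $\sigma^{(k)}=R_k\rho_k/(c_V\rho)$ with $c_V\rho=\langle c_{Vk}\rho_k\rangle$ by the quotient rule expresses $d\sigma^{(1)},d\sigma^{(2)}$ through $d\rho_1,d\rho_2$, reducing $\mathcal H_1\,d\sigma^{(1)}+\mathcal H_2\,d\sigma^{(2)}$ to a multiple of $\rho_2\,d\rho_1-\rho_1\,d\rho_2$ whose coefficient is precisely $\mathcal R/\sqrt d$. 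Collecting the coefficients of $d\rho_1,d\rho_2,d(\rho\ve)$ then yields \eqref{form for dp} with $\mathcal P_k$ and $\mathcal P$ as claimed.

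For the \emph{consequently}-part, I would note that along an isentropic perturbation at frozen mass fractions $y_k=\rho_k/\rho$ — these are transported by the flow, as follows from \eqref{mass eq alpha heter} and \eqref{sum mass homog}, and freezing $y_k$ amounts exactly to freezing $\sigma^{(k)}$ and $\ve_0$ as in Proposition \ref{prop:speed of sound} — the Gibbs relation $d\ve=-p_+\,d(1/\rho)$ gives $d\rho_k=y_k\,d\rho$ and $d(\rho\ve)=\frac{\rho\ve+p_+}{\rho}\,d\rho$. Dividing \eqref{form for dp} by $d\rho$ produces the Abgrall-type combination $c_s^2=\langle\frac{\rho_k}{\rho}\mathcal P_k\rangle+\frac{\rho\ve+p_+}{\rho}\mathcal P$, as in \cite[Prop.~11]{FMM10} and \cite{A99}. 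To bring it to the compact form I would first use the explicit $\mathcal P_k$: the $\mathcal R$-contributions to $\langle\frac{\rho_k}{\rho}\mathcal P_k\rangle$ cancel, so (by $\rho\ve_0=\langle\rho_k\ve_{0k}\rangle$) $\langle\frac{\rho_k}{\rho}\mathcal P_k\rangle=-\mathcal P\ve_0$, whence $c_s^2=\mathcal P\,(\rho(\ve-\ve_0)+p_+)/\rho$. Finally, multiplying the rational equation \eqref{orig eq for p} at $p=p_+$ by $(p_++p_{*1})(p_++p_{*2})$ gives $\sigma^{(1)}\mathcal H_1+\sigma^{(2)}\mathcal H_2=(p_++p_{*1})(p_++p_{*2})$, and a one-line rearrangement turns this into $(\rho(\ve-\ve_0)+p_+)\big[(\gamma-1)p_++\sigma^{(1)}p_{*2}+\sigma^{(2)}p_{*1}\big]=\gamma(p_++p_{*1})(p_++p_{*2})$, i.e. $\mathcal P\,(\rho(\ve-\ve_0)+p_+)=\gamma(p_++p_{*1})(p_++p_{*2})/\sqrt d$, which is the asserted value of $c_s^2$ and agrees with \eqref{for speed of sound}.

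I expect the only real difficulty to be the bookkeeping in the second paragraph: one must keep careful track of which mixture coefficients depend on $\rho_1,\rho_2$ and verify that the bulky intermediate expressions genuinely collapse onto $\mathcal P$, $\mathcal R$ and $\mathcal H_k$; an algebra slip is easy, but nothing conceptual is involved. The single thing one must \emph{see} is that the final algebraic identity is just \eqref{orig eq for p} in disguise, after which the reduction of the Abgrall sum to the compact speed-of-sound formula is immediate.
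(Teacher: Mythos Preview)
Your proof is correct and follows essentially the same route as the paper: implicit differentiation of the quadratic equation to get $\sqrt d\,dp_+=p_+\,db+dc$, expressing this via $\mathcal H_k\,d\sigma^{(k)}$ and $\sqrt d\,\mathcal P\,d(\rho(\ve-\ve_0))$, computing $d\sigma^{(k)}$ by the quotient rule, and then collapsing the Abgrall sum using the cancellation of the $\mathcal R$-terms; the only cosmetic differences are your use of the $a_k$-decomposition (which streamlines $p_+\,db+dc=(p_++p_{*2})\,da_1+(p_++p_{*1})\,da_2$) and your direct derivation of the final identity from \eqref{orig eq for p}, whereas the paper cites the intermediate formulas \eqref{form for cs2 aux}--\eqref{form for cs2 aux 2} from the proof of Proposition~\ref{prop:speed of sound}. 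One small slip: the coefficient of $\rho_2\,d\rho_1-\rho_1\,d\rho_2$ is $\mathcal R$, not $\mathcal R/\sqrt d$, since the $\sqrt d$ already sits on the left in $\sqrt d\,dp_+$.
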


\par Formula \eqref{for speed of sound} for $c_s^2$ has originally been  derived from the quadratic equation \eqref{quad eq for p}.
Interestingly, a similar technique applied to the rational equation \eqref{orig eq for p} leads to another formula for $c_s^2$ without radicals.
\begin{proposition}
\label{prop: 2nd formula for c_s^2}
The following alternative formula for $c_s^2$ holds
\begin{eqnarray}
c_s^2=\frac{\gamma}{\rho}
\Big\langle\frac{\sigma^{(k)}(\rho(\ve-\ve_0)-p_{*k})}{(p_+ +p_{*k})^2}\Big\rangle^{-1}.
\label{for speed of sound 2}
\end{eqnarray}
\end{proposition}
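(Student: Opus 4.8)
The plan is to imitate the derivation of \eqref{for speed of sound}, but starting this time from the rational equation \eqref{orig eq for p} instead of the quadratic one \eqref{quad eq for p}. First I would observe that $p_+$ itself solves \eqref{orig eq for p}: since \eqref{quad eq for p} is obtained from \eqref{orig eq for p} by clearing the denominators $p+p_{*1},p+p_{*2}$, any root of \eqref{quad eq for p} that is not one of the spurious values $-p_{*1},-p_{*2}$ solves \eqref{orig eq for p}; and since $p_+>0\geq-p_{*k}$ by Proposition~\ref{the main}, the physical root $p_+$ does satisfy \eqref{orig eq for p}.

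Next, since $\ve_0$ and $\sigma^{(1)},\sigma^{(2)}$ (hence also $\gamma=\langle\sigma^{(k)}\rangle+1$) are frozen in the differentiation, as prescribed in Proposition~\ref{prop:speed of sound}, the coefficients $b$ and $c$ in \eqref{coeff b}, \eqref{coeff c} --- and therefore $p_+$ --- depend on $(\rho,\ve)$ only through the single quantity $e:=\rho(\ve-\ve_0)$; moreover $p_+=p_+(e)$ is smooth since $d=b^2+4c>0$ (Proposition~\ref{the main}). Using $\partial_\rho e=\ve-\ve_0$ and $\partial_\ve e=\rho$, the defining relation \eqref{for speed of sound} becomes
\[
 c_s^2=\Big((\ve-\ve_0)+\frac{p_+}{\rho}\Big)p_+'(e)=\frac{(e+p_+)\,p_+'(e)}{\rho}.
\]
I would then differentiate the rational equation \eqref{orig eq for p}, written as $\big\langle\sigma^{(k)}(e-p_{*k})/(p_++p_{*k})\big\rangle=1$ with $p_+=p_+(e)$, with respect to $e$. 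This gives $p_+'(e)\,S=T$, where
\[
 S:=\Big\langle\frac{\sigma^{(k)}(e-p_{*k})}{(p_++p_{*k})^2}\Big\rangle,\qquad
 T:=\Big\langle\frac{\sigma^{(k)}}{p_++p_{*k}}\Big\rangle>0 .
\]
In particular $p_+'(e)\neq0$, hence $S=T/p_+'(e)\neq0$, and substituting $p_+'(e)=T/S$ above yields $c_s^2=(e+p_+)T/(\rho S)$.

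It then remains to simplify the numerator: writing $e+p_+=(e-p_{*k})+(p_++p_{*k})$ inside $T$,
\[
 (e+p_+)\,T=\Big\langle\frac{\sigma^{(k)}(e-p_{*k})}{p_++p_{*k}}\Big\rangle+\langle\sigma^{(k)}\rangle=1+(\gamma-1)=\gamma ,
\]
by \eqref{orig eq for p} and the identity $\langle\sigma^{(k)}\rangle=\gamma-1$ from \eqref{rho sigmak}. Hence $c_s^2=\gamma/(\rho S)$, which, recalling $e=\rho(\ve-\ve_0)$, is precisely \eqref{for speed of sound 2}.

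The computations here are short one-liners, so I do not expect a serious obstacle. The two points that deserve a little care are the very first step --- namely, that one is entitled to differentiate the rational form \eqref{orig eq for p} rather than the quadratic one, i.e.\ that $p_+$ is not a spurious root introduced when clearing denominators (settled by $p_+>0$ in Proposition~\ref{the main}) --- and the non-vanishing of the scalar $S$ in the denominator of \eqref{for speed of sound 2}, which is already contained in the relation $p_+'(e)S=T>0$ and is also consistent with $c_s^2>0$ from Proposition~\ref{prop:speed of sound}.
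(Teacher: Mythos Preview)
Your proposal is correct and follows essentially the same approach as the paper: both differentiate the rational equation \eqref{orig eq for p} (rather than the quadratic one) with $\ve_0,\sigma^{(k)}$ frozen, and then simplify the numerator via the identity $e+p_+=(e-p_{*k})+(p_++p_{*k})$ together with \eqref{orig eq for p} and $\langle\sigma^{(k)}\rangle=\gamma-1$. Your packaging through the single variable $e=\rho(\ve-\ve_0)$ and the chain rule is a bit tidier than the paper's direct $\partial_\rho,\partial_\ve$ computation, but the argument is the same.
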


\par It may seem strange how two such different given formulas for $c_s^2$ can coincide, but we will demonstrate that explicitly in Appendix after the proof of Proposition \ref{prop: 2nd formula for c_s^2}.

\par Let us compare some definitions of the squared speed of sound in mixtures.
\begin{proposition}
\label{prop: two speeds of sound}
The following formula relating $c_s^2$ and $c_{sW}^2$ holds
\begin{eqnarray}
\frac{1}{\rho c_s^2}=\frac{1}{\rho c_{sW}^2}
+\frac{c_{p1}\alpha_1r_1 c_{p2}\alpha_2r_2}{\theta\rho c_p}(\zeta_1-\zeta_2)^2,
\label{cs cw frac}
\end{eqnarray}
where $c_{sW}^2$ is the well-known squared Wood speed of sound in mixtures such that
\begin{eqnarray*}
 \frac{1}{\rho c_{sW}^2}=\Big\langle\frac{\alpha_k}{r_kc_{sk}^2}\Big\rangle,\ \ c_{sk}^2:=\gamma_k(\gamma_k-1)c_{Vk}\theta,
\end{eqnarray*}
with $\rho c_p=\langle c_{pk}\alpha_kr_k\rangle$ and
\begin{eqnarray}
 \zeta_k:=\Big(1-\frac{1}{c_{pk}}\partial_\theta\ve_k(\theta,p_k)\Big)\frac{\theta}{p_k}=\frac{1}{c_{pk}r_k},\ \
 k=1,2.
\label{cp zeta}
\end{eqnarray}

Consequently, we have $ c_s^2\leq c_{sW}^2$.
\end{proposition}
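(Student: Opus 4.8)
The plan is to start from the radical-free formula \eqref{for speed of sound 2} of Proposition~\ref{prop: 2nd formula for c_s^2}, to rewrite $1/(\rho c_s^2)$ in a form directly comparable with the Wood formula, and then to identify the difference $1/(\rho c_s^2)-1/(\rho c_{sW}^2)$ with the claimed non-negative quantity, from which $c_s^2\leq c_{sW}^2$ follows at once.

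As preparation I would record the elementary identities implied by \eqref{sg EOS} together with $p=p_+=p_1=p_2$: the relation $p_++p_{*k}=R_kr_k\theta$, whence (using $c_{sk}^2=\gamma_k(\gamma_k-1)c_{Vk}\theta=\gamma_kR_k\theta$) $r_kc_{sk}^2=\gamma_k(p_++p_{*k})$ and therefore $1/(\rho c_{sW}^2)=\langle\alpha_k/(\gamma_k(p_++p_{*k}))\rangle$; the identity $\Delta_*=\theta(R_2r_2-R_1r_1)$; the scalar relations $c_{pk}=c_{Vk}+R_k$, $c_p=c_V+R$, $\gamma=c_p/c_V$; and the second equality $\zeta_k=1/(c_{pk}r_k)$ in \eqref{cp zeta}, obtained by inserting $r_k=(p_k+p_{*k})/(R_k\theta)$ into $\ve_k$ from \eqref{sg EOS} and computing $\partial_\theta\ve_k$.

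Next, using $\langle\alpha_k\rangle=1$ to rewrite \eqref{sg EOS cons2a} as $\rho(\ve-\ve_0)-p_{*k}=c_V\rho\theta+(-1)^{k+1}\alpha_{3-k}\Delta_*$ and inserting $\sigma^{(k)}=R_k\rho_k/(c_V\rho)$, each summand in \eqref{for speed of sound 2} collapses to $\alpha_k/(p_++p_{*k})+(-1)^{k+1}\alpha_1\alpha_2\Delta_*/(c_V\rho\theta(p_++p_{*k}))$; summing over $k$ and using $1/(p_++p_{*1})-1/(p_++p_{*2})=\Delta_*/((p_++p_{*1})(p_++p_{*2}))$ yields
\[
\frac1{\rho c_s^2}=\frac1\gamma\Big\langle\frac{\alpha_k}{p_++p_{*k}}\Big\rangle+\frac{\alpha_1\alpha_2\Delta_*^2}{\gamma c_V\rho\theta\,(p_++p_{*1})(p_++p_{*2})}.
\]
Subtracting $1/(\rho c_{sW}^2)$ then leaves $\langle(\gamma^{-1}-\gamma_k^{-1})\alpha_k/(p_++p_{*k})\rangle$ plus the last, manifestly non-negative term; writing $\gamma^{-1}-\gamma_k^{-1}=c_V/c_p-c_{Vk}/c_{pk}$ and using $\rho c_V=\langle c_{Vk}\rho_k\rangle$, $\rho c_p=\langle c_{pk}\rho_k\rangle$ gives $\gamma^{-1}-\gamma_1^{-1}=\rho_2E/(\rho c_pc_{p1})$ and $\gamma^{-1}-\gamma_2^{-1}=-\rho_1E/(\rho c_pc_{p2})$ with $E:=c_{p1}c_{V2}-c_{V1}c_{p2}$.

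Finally I would bring the two remaining contributions over the common denominator $\rho c_p\theta\,c_{p1}R_1c_{p2}R_2r_1r_2$ (inserting $p_++p_{*k}=R_kr_k\theta$ and $\Delta_*=\theta(R_2r_2-R_1r_1)$ into the last term), which turns the numerator into $\alpha_1\alpha_2\big[E(c_{p2}R_2r_2^2-c_{p1}R_1r_1^2)+c_{p1}c_{p2}(R_2r_2-R_1r_1)^2\big]$. The decisive point — and the only step I expect to require real work — is the algebraic identity
\[
E(c_{p2}R_2r_2^2-c_{p1}R_1r_1^2)+c_{p1}c_{p2}(R_2r_2-R_1r_1)^2=R_1R_2(c_{p2}r_2-c_{p1}r_1)^2,
\]
which is proved by expanding the left-hand side and repeatedly using $c_{Vk}+R_k=c_{pk}$ to collapse the coefficients of $r_1^2$, $r_1r_2$ and $r_2^2$ to $c_{p1}^2R_1R_2$, $-2c_{p1}c_{p2}R_1R_2$ and $c_{p2}^2R_1R_2$ respectively. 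Cancelling $R_1R_2$ and recalling $\rho_k=\alpha_kr_k$, $\zeta_k=1/(c_{pk}r_k)$ and $\rho c_p=\langle c_{pk}\alpha_kr_k\rangle$ reproduces exactly the right-hand side of \eqref{cs cw frac}; since that term is non-negative and $c_s^2>0$ by \eqref{for speed of sound} (hence also $c_{sW}^2>0$), we obtain $1/(\rho c_s^2)\geq1/(\rho c_{sW}^2)$, i.e.\ $c_s^2\leq c_{sW}^2$. Everything besides that factorization is routine bookkeeping with the stiffened-gas relations.
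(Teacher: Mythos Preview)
Your argument is correct. The route, however, is genuinely different from the paper's. You start from the radical-free expression \eqref{for speed of sound 2} for $c_s^2$, derive the compact identity
\[
\frac{1}{\rho c_s^2}=\frac{1}{\gamma}\Big\langle\frac{\alpha_k}{p_++p_{*k}}\Big\rangle
+\frac{\alpha_1\alpha_2\Delta_*^2}{\gamma c_V\rho\theta\,(p_++p_{*1})(p_++p_{*2})},
\]
subtract the Wood term, and close the computation with the factorization
$E(c_{p2}R_2r_2^2-c_{p1}R_1r_1^2)+c_{p1}c_{p2}(R_2r_2-R_1r_1)^2=R_1R_2(c_{p2}r_2-c_{p1}r_1)^2$;
the root $p_-$ and the discriminant $\sqrt d$ never enter. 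The paper instead works in the opposite direction: it takes the two terms $\mathcal F_1=1/(\rho c_{sW}^2)$ and $\mathcal F_2$ on the right of \eqref{cs cw frac}, adds them, and after a chain of reductions recognises the result as $(p_+-p_-)/\big(\gamma(p_++p_{*1})(p_++p_{*2})\big)$, which is $1/(\rho c_s^2)$ by formula \eqref{for speed of sound} with $\sqrt d=p_+-p_-$; this relies on the explicit expression \eqref{form for pp and pm} for $p_-$ from Proposition~\ref{the main}. Your approach is slightly more self-contained (it only needs Proposition~\ref{prop: 2nd formula for c_s^2} and the stiffened-gas relations), while the paper's route ties the result more directly to the structure of the quadratic equation for $p$; the algebraic workload is comparable.
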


\par Applying formula \eqref{for speed of sound} for $c_s^2$, we can also compare the three formulas for the squared speed of sound in mixtures known in the literature.
\begin{proposition}
\label{prop: 3 sq speeds of sound}
The inequalities hold
\begin{eqnarray}
 c_s^2\leq\gamma(\gamma-1)c_V\theta\leq\Big\langle\frac{\rho_k}{\rho}c_{sk}^2\Big\rangle
 =\frac{1}{\rho}\langle\alpha_k\gamma_k(p_k+p_{*k})\rangle.
\label{ineq for cs}
\end{eqnarray}
\end{proposition}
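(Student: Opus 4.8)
The plan is to establish the three assertions in the order: the rightmost identity, the right inequality, and then the left inequality (the only one needing real computation). The identity $\langle(\rho_k/\rho)c_{sk}^2\rangle=\tfrac1\rho\langle\alpha_k\gamma_k(p_k+p_{*k})\rangle$ is immediate: from $c_{sk}^2=\gamma_k(\gamma_k-1)c_{Vk}\theta$ and $R_k=(\gamma_k-1)c_{Vk}$ one gets $\rho_kc_{sk}^2=\gamma_kR_k\rho_k\theta$, while \eqref{sg EOS cons1} gives $R_k\rho_k\theta=\alpha_k(p_k+p_{*k})$; summing over $k$ and dividing by $\rho$ finishes it.

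For the right inequality, I would cancel $\theta>0$, multiply by $\rho$, and use $(\gamma-1)c_V\rho=R\rho=\langle R_k\rho_k\rangle$ from \eqref{coeff of mixture}, which reduces the claim to $\langle(\gamma_k-\gamma)R_k\rho_k\rangle\geq0$. The key point is that \eqref{coeff of mixture} also yields $\gamma-1=R/c_V=\langle R_k\rho_k\rangle/\langle c_{Vk}\rho_k\rangle$, so that $\gamma$ is the weighted mean of $\gamma_1,\gamma_2$ with the positive weights $w_k:=c_{Vk}\rho_k$, i.e.\ $\langle w_k(\gamma_k-\gamma)\rangle=0$. Since $R_k\rho_k=w_k(\gamma_k-1)$, substituting $\gamma_k-1=(\gamma_k-\gamma)+(\gamma-1)$ turns $\langle(\gamma_k-\gamma)R_k\rho_k\rangle$ into the ``variance'' $\langle w_k(\gamma_k-\gamma)^2\rangle\geq0$ -- exactly what is required (with equality iff $\gamma_1=\gamma_2$).

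The left inequality $c_s^2\leq\gamma(\gamma-1)c_V\theta=\gamma R\theta$ is where the substance lies. I would start from the compact formula \eqref{for speed of sound}, $c_s^2=\gamma(p_++p_{*1})(p_++p_{*2})/(\rho\sqrt d)$, observe from \eqref{p pm roots} that $\sqrt d=p_+-p_-$, and insert the explicit roots from \eqref{form for pp and pm} together with $R\rho\theta=p_++\langle\alpha_kp_{*k}\rangle$ from \eqref{sg EOS cons1a}. The inequality then reduces to the purely algebraic statement $(p_++p_{*1})(p_++p_{*2})\leq R\rho\theta\,(p_+-p_-)$. Expanding both sides and simplifying with $\langle\alpha_k\rangle=1$ -- in particular via the elementary identity $(\alpha_1p_{*1}+\alpha_2p_{*2})(\alpha_1p_{*2}+\alpha_2p_{*1})-p_{*1}p_{*2}=\alpha_1\alpha_2\Delta_*^2$ and a second use of $p_++\langle\alpha_kp_{*k}\rangle=R\rho\theta$ -- I expect the difference of the two sides to collapse to $\gamma\,\alpha_1\alpha_2\Delta_*^2\geq0$, which proves the claim (with equality iff $p_{*1}=p_{*2}$).

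The only real obstacle I foresee is this last simplification, together with one temptation worth resisting. One might instead try to apply the Cauchy--Schwarz inequality to the radical-free formula \eqref{for speed of sound 2}, paired with the constraint $\langle\sigma^{(k)}(\rho(\ve-\ve_0)-p_{*k})/(p_++p_{*k})\rangle=1$ coming from \eqref{orig eq for p}; this would give the bound at once \emph{provided} all the quantities $\sigma^{(k)}(\rho(\ve-\ve_0)-p_{*k})$ are nonnegative. However, one of them can be negative -- for instance when one stiffening constant $p_{*k}$ is large while the corresponding volume fraction $\alpha_k$ is small -- so that route is not available, and the direct algebra through the explicit roots $p_\pm$ of \eqref{quad eq for p} is the one to take.
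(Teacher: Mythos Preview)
Your argument is correct. For the left inequality $c_s^2\leq\gamma(\gamma-1)c_V\theta$ you follow essentially the same route as the paper: both start from the compact formula \eqref{for speed of sound} with $\sqrt d=p_+-p_-$, insert the explicit roots \eqref{form for pp and pm} and $R\rho\theta=p_++\langle\alpha_kp_{*k}\rangle$, and reduce the question to the algebraic identity that collapses (via $(\alpha_1p_{*1}+\alpha_2p_{*2})(\alpha_1p_{*2}+\alpha_2p_{*1})-p_{*1}p_{*2}=\alpha_1\alpha_2\Delta_*^2$) to a nonnegative multiple of $\alpha_1\alpha_2\Delta_*^2$. The paper packages this as $\rho\sqrt d\,(c_s^2-\gamma(\gamma-1)c_V\theta)=-\gamma^2\alpha_1\alpha_2\Delta_*^2$, which is exactly your identity multiplied by $-\gamma$. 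One small correction: equality holds not only when $p_{*1}=p_{*2}$ but also in the degenerate cases $\alpha_1=0$ or $\alpha_2=0$, as the paper notes after the statement.

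Where you genuinely add something is the second inequality $\gamma(\gamma-1)c_V\theta\leq\langle(\rho_k/\rho)c_{sk}^2\rangle$. The paper does not prove it here at all; it simply cites an earlier result (Proposition~1 in \cite{ZL23}). Your variance argument---recognizing $\gamma$ as the $c_{Vk}\rho_k$-weighted mean of the $\gamma_k$, whence $\langle(\gamma_k-\gamma)R_k\rho_k\rangle=\langle c_{Vk}\rho_k(\gamma_k-\gamma)^2\rangle\geq0$---is a clean self-contained proof and in that sense improves on the paper's treatment. Your remark about the Cauchy--Schwarz route through \eqref{for speed of sound 2} failing because $\rho(\ve-\ve_0)-p_{*k}$ need not be nonnegative is also correct and worth keeping.
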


\par According to the proof given in Appendix, the first inequality \eqref{ineq for cs} can turn into the equality only in the cases $p_{*1}=p_{*2}$, or $\alpha_1=0$, or $\alpha_2=0$.

\par The results can be generalized to the case of multicomponent mixtures provided that $p_{*k}$ take only two distinct values similarly to \cite{CBS17CF}.
Also they are partially extended to the case of the more general Noble-Abel stiffened-gas equations of state \cite{LeMS16,SBLeM16} that is presented in another paper \cite{ZL23DM};
in this case, the proofs become more cumbersome.

\par The balance PDEs for the mass of components, total momentum and total energy are as follows
\begin{eqnarray}
\partial_t\rho_k+\dv(\rho_k\*u)=0,\ \ k=1,2,
\label{mass eq alpha NS}\\[1mm]
\partial_t(\rho\*u)+\dv(\rho\*u\otimes\*u)+\nabla p=\dv\Pi^{NS}+\rho\*f,
\label{moment eq NS}\\[1mm]
\partial_t\big(\tfrac12\rho|\*u|^2+\rho\ve\big)+\dv\big((\tfrac12\rho|\*u|^2+\rho\ve+p)\*u\big)
\nonumber\\[1mm]
=\dv(-\*q^F+\Pi^{NS}\*u)+\rho\*u\cdot\*f+Q,
\label{energy eq NS}
\end{eqnarray}
see \cite{LeMSN14} in the case $\Pi^{NS}=0$ and $Q=0$.
Here the main sought functions are the alternative densities
$\rho_k>0$, $k=1,\,2$, the velocity $\*u$ and the specific internal energy $\ve$ of the mixture.
Also $\rho=\langle\rho_k\rangle$, but formulas \eqref{rho rhoeps} and \eqref{sg EOS} are not in use.
The pressure $p$ and temperature $\theta$ are given by the formulas
\begin{equation}
p(\rho_1,\rho_2,\ve)=p_+=\frac12(b+\sqrt{d}),\, \theta(\rho_1,\rho_2,\ve)=\frac{\rho(\ve-\ve_0)+p}{\gamma c_V\rho},
\label{p and temp}
\end{equation}
see formulas \eqref{p pm roots} and \eqref{sg EOS cons2b}.
Recall that here $d=b^2+4c$ (alternatively, formula \eqref{second form for d} can be used),
with $b=b(\rho_1,\rho_2,\ve)$ and $c=c(\rho_1,\rho_2,\ve)$ given in definitions
\eqref{coeff b}, \eqref{coeff c} and \eqref{rho sigmak}.

\par We emphasize that this system does not contain $\alpha_k$ and $r_k=\rho_k/\alpha_k$, $k=1,2$, although they can be computed a posteriori,
in particular, see \eqref{vol fr mass fr}, or, according to \eqref{sg EOS cons1}, we have
\begin{eqnarray}
 \alpha_k=\frac{R_k\rho_k\theta}{p_+ +p_{*k}},\ \ k=1,2.
\label{form for alpha k}
\end{eqnarray}
Recall that this formula
and the property $\langle\alpha_k\rangle=1$ imply an alternative formula for $\theta$:
\begin{eqnarray}
\theta=\Big\langle\frac{R_k\rho_k}{p_+ +p_{*k}}\Big\rangle^{-1},
\label{2nd form for temp}
\end{eqnarray}
that we apply in our computations below.
For computing $r_k$, the formula $r_k=(p_+ +p_{*k})/(R_k\theta)$ seems to be more reliable.

\par Notice that the quasi-homogeneous form \textit{is equivalent} to the original heterogeneous one.
Indeed, formulas \eqref{form for alpha k} and \eqref{2nd form for temp} imply that
\begin{equation}
 \langle\alpha_k\rangle=1,\,
 p_k=R_kr_k\theta-p_{*k}=R_k\frac{\rho_k}{\alpha_k}\theta-p_{*k}=p_+,\, k=1,2,
\label{p k and p +}
\end{equation}
see the first equation of state \eqref{sg EOS}, and lead to Eqs. \eqref{eq for pk}.
Next, we have
\begin{eqnarray}
 \langle \alpha_kr_k\ve_k\rangle
 =\Big\langle \alpha_kr_k\Big(c_{Vk}\theta+\frac{p_{*k}}{r_k}+\ve_{0k}\Big)\Big\rangle
\nonumber\\[1mm]
 =c_V\rho\theta+\langle \alpha_kp_{*k}\rangle+\rho\ve_0,
\label{sum of ve k}
\end{eqnarray}
see the second equation of state \eqref{sg EOS}.
The quadratic Eq. \eqref{quad eq for p} implies the rational Eq. \eqref{orig eq for p}. Due to formulas \eqref{form for alpha k} and \eqref{2nd form for temp}, the latter equation can be rewritten as
\[
 \frac{1}{c_V\rho\theta}\rho(\ve-\ve_0)-\frac{\langle \alpha_kp_{*k}\rangle}{c_V\rho\theta}=1.
\]
Thus, $\rho(\ve-\ve_0)=c_V\rho\theta+\langle \alpha_kp_{*k}\rangle$, and formula \eqref{sum of ve k} implies that $\langle \alpha_kr_k\ve_k\rangle=\rho\ve$, i.e., it implies the third Eq. \eqref{rho rhoeps}.
Also note that the first and last equalities \eqref{p k and p +} imply the formula $p_+=R\rho\theta-\langle \alpha_kp_{*k}\rangle$ that, along with the preceding formula for $\rho(\ve-\ve_0)$, lead to the second
formula~\eqref{p and temp}.

\par In the simplest case of the perfect polytropic gases, i.e., $p_{*1}=p_{*2}=0$, we get
\begin{eqnarray*}
p_+=R\rho\theta=R_1\rho_1\theta+R_2\rho_2\theta,
\\[1mm]
\alpha_k=\frac{R_k\rho_k}{R\rho},\ \
\alpha_k p_+=R_k\rho_k\theta,\ \ k=1,2.
\end{eqnarray*}
Consequently, the above heterogeneous mixture model and the homogeneous one, with the different pressures $p_1=R_1\rho_1\theta$ and $p_2=R_2\rho_2\theta$ of the components occupying the same volume, become equivalent. A similar observation was given in \cite{CBS17CF}.
This explains why computations in \cite{ZFL22,ZL23,ESh22} using the homogeneous model led to the same results as in the papers based on the heterogeneous models.
\par Differentiating in the total momentum balance PDE \eqref{moment eq NS} and using the mass balance PDE \eqref{sum mass homog}, one derives the velocity balance PDE
\begin{eqnarray}
\partial_t\*u+(\*u\cdot\nabla)\*u+\rho^{-1}\nabla p=\rho^{-1}\dv\Pi^{NS}+\*f.
\label{velo homog}
\end{eqnarray}

\par For differentiable solutions, the systems of PDEs \eqref{mass eq alpha NS}--\eqref{energy eq NS} and
\eqref{mass eq alpha NS}, \eqref{velo homog} and \eqref{eq for p} are equivalent.
Recall that this allows one to give an easier analysis of the hyperbolicity properties in the case $\mu=\lambda=\vk=0$.
For simplicity, in the 1D case and for $\*f=Q=0$, the latter system can be written in the canonical matrix form
\begin{eqnarray*}
 \partial_t\*z+A\partial_1\*z=0,
 \\[1mm]
 \*z=(\rho_1,\rho_2,u,p)^T,\ \
 A:=\left(\begin{array}{cccc}
 u & 0 & \rho_1 & 0 \\
 0 & u & \rho_2 & 0 \\
 0 & 0 & u & \rho^{-1} \\
 0 & 0 & \rho c_s^2 & u \\
  \end{array}\right),
\end{eqnarray*}
with $u=u_1$.
One can easily calculate $\det(A-\lambda I)=(\lambda-u)^2[(\lambda-u)^2-c_s^2]$, and thus $A$ has the real eigenvalues $\lambda_{1,2}=u$ and $\lambda_{3,4}=u\pm c_s$.

\section{Regularized systems of PDEs for the dynamics of quasi-homogeneous mixtures of stiffened gases}

\par In this Section, we accomplish the formal regularization procedure first suggested in \cite{Z12MM2} for the single-component gas;
this procedure was shown to allow one to get a simple derivation of the quasi-gasdynamic (QGD) regularization described in \cite{E07}.

\par The procedure has recently been
used for the Euler-type system of PDEs for multicomponent one-velocity and one-temperature homogeneous mixture gas dynamics in Appendix A in \cite{ZL23}.
In the balance PDEs for the mass of components \eqref{mass eq alpha NS}, the total momentum \eqref{moment eq NS} and the total energy of the mixture \eqref{energy eq NS}, we accomplish respectively the following changes
\begin{eqnarray*}
 \rho_k\*u\ \rightarrow\ \rho_k\*u+\tau\partial_t(\rho_k\*u),
\label{replacement 1}
\\[1mm]
 \dv(\rho\*u\otimes\*u)+\nabla p-\rho\*f
 \rightarrow\
\dv(\rho\*u\otimes\*u+\tau\partial_t(\rho\*u\otimes\*u))
\\[1mm]
+\nabla (p+\tau\partial_t p)-(\rho+\tau\partial_t\rho)\*f
\label{replacement 2}
\end{eqnarray*}
and
\begin{eqnarray*}
(E+p)\*u\
 \rightarrow\ (E+p)\*u+\tau\partial_t\big((E+p)\*u\big),\\[1mm]
 \rho\*u\cdot\*f\ \rightarrow\ \big(\rho\*u+\tau\partial_t(\rho\*u)\big)\cdot\*f,
\label{replacement 3}
\end{eqnarray*}
where $E=(1/2)\rho|\*u|^2+\rho\ve$ is the total mixture energy and $\tau>0$ is a regularization parameter which can depend on all the sought functions.

\par These changes lead from the original Navier-Stokes-Fou\-ri\-er-type system
\eqref{mass eq alpha NS}-\eqref{energy eq NS}
to its following regularized QGD version
\begin{eqnarray}
\partial_t\rho_k+\dv\big(\rho_k(\*u-\*w_k)\big)=0,\ \ k=1,2,
\label{mass eq alpha QGD}\\[1mm]
\partial_t(\rho\*u)+\dv(\rho(\*u-\*w)\otimes\*u)+\nabla p
\nonumber\\[1mm]
=\dv(\Pi^{NS}+\Pi^\tau)+\big(\rho-\tau\dv(\rho\*u)\big)\*f,
\label{moment eq QGD}\\[1mm]
\partial_tE+\dv\big((E+p)(\*u-\*w)\big)
\nonumber\\[1mm]
=\dv(-\*q^F-\*q^\tau+(\Pi^{NS}+\Pi^\tau)\*u)+\rho(\*u-\*w)\cdot\*f+Q,
\label{energy eq QGD}
\end{eqnarray}
where the unknown functions are the same.
This system involves the regularizing velocities
\begin{eqnarray}
 \*w_k:=
 \frac{\tau}{\rho_k}\dv(\rho_k\*u)\*u+\widehat{\*w},\,
\widehat{\*w}
=\tau\Big((\*u\cdot\nabla)\*u+\frac{1}{\rho}\nabla p-\*f\Big),
\label{wk what}\\[1mm]
 \*w:=\Big\langle\frac{\rho_k}{\rho}\*w_k\Big\rangle=\frac{\tau}{\rho}\dv(\rho\*u\otimes \*u+\nabla p-\rho\*f)
\nonumber\\[1mm]
 =\frac{\tau}{\rho}\dv(\rho\*u)\*u+\widehat{\*w},
\label{w}
\end{eqnarray}
with $k=1,2$,
the regularizing viscous stress and heat flux
\begin{eqnarray}
\Pi^\tau:=\rho\*u\otimes\widehat{\*w}+\tau\Big(\*u\cdot\nabla p+\rho c_s^2\dv\*u-\frac{c_s^2}{\gamma c_V\theta}Q\Big)\mathbb{I},
\label{Pi tau}
\\
-\*q^\tau:=\tau\Big(\*u\cdot\Big(\rho\nabla\ve-\frac{p}{\rho}\nabla\rho\Big)-Q\Big)\*u.
\label{q tau}
\end{eqnarray}
In the last expression, using formula \eqref{sg EOS cons2b}, we can rewrite
\begin{eqnarray}
 \rho\nabla\ve-\frac{p}{\rho}\nabla\rho=\nabla(\rho\ve)-\frac{\rho\ve+p}{\rho}\nabla\rho\nonumber\\[1mm]
 =\nabla(\rho\ve)-(\gamma c_V\theta+\ve_0)\nabla\rho.
\label{term of q tau}
\end{eqnarray}
Actually, the derivation repeats the argument from Appendix A in \cite{ZL23}, its only difference being that another PDE for the pressure \eqref{eq for p} is used (for $\mu=\lambda=\vk=0$) and, in the expression for $\*q^\tau$, the fraction $p/\rho$ remains in its general form.
Note that the general form of Eqs. \eqref{mass eq alpha QGD}-\eqref{w} is the same as in \cite{ESh22,ZL23} but formulas \eqref{Pi tau}-\eqref{q tau} are different.
We emphasize that the form of the multiplier in front of $\dv\mathbf u$ in \eqref{Pi tau} is the same as in the case of single-component real gases in \cite{ZG11}.

\par For the regularized QGD system of PDEs, the additional balance PDEs for the mass, kinetic and internal energies of the mixture hold
\begin{eqnarray}
 \partial_t\rho + \dv(\rho(\*u-\*w))=0,
\label{sum mass regul}\\[1mm]
\tfrac12\partial_t(\rho|\*u|^2)+\tfrac12\dv\big(\rho|\*u|^2(\*u-\*w)\big)+\*u\cdot\nabla p
\nonumber\\[1mm]
=(\dv(\Pi^{NS}+\Pi^\tau))\cdot\*u+\big(\rho-\tau\dv(\rho\*u)\big)\*f\cdot\*u,
\label{kin en regul}\\[1mm]
 \partial_t(\rho\ve)+\dv(\rho\ve(\*u-\*w))+p\dv\*u
\nonumber\\[1mm]
 =\dv(-\*q^F-\*q^\tau+p\*w)+(\Pi^{NS}+\Pi^\tau):\nabla\*u
\nonumber\\[1mm]
 -\rho\*f\cdot\widehat{\*w}+Q
\label{int en regul}
\end{eqnarray}
which are derived similarly to the corresponding original PDEs \eqref{sum mass homog}-\eqref{int en homog}.

\par We also consider the
simplified regularized quasi-hyd\-ro\-dynamic (QHD) system of PDEs
\begin{eqnarray}
\partial_t\rho_k+\dv\big(\rho_k(\*u-\widehat{\*w})\big)=0,\ \ k=1,2,
\label{mass eq alpha QHD}\\[1mm]
\partial_t(\rho\*u)+\dv(\rho(\*u-\widehat{\*w})\otimes\*u)
+\nabla p
\nonumber\\[1mm]
=\dv(\Pi^{NS}+\widehat{\Pi}^\tau)+\rho\*f,
\label{moment eq QHD}\\[1mm]
\partial_tE+\dv\big((E+p)(\*u-\widehat{\*w})\big)
\nonumber\\[1mm]
=\dv\big(-\*q^F+(\Pi^{NS}+\widehat{\Pi}^\tau)\*u\big)+\rho(\*u-\widehat{\*w})\cdot\*f+Q.
\label{energy eq QHD}
\end{eqnarray}
Here, the regularizing velocity $\*w$ and viscous stress $\Pi^\tau$ are simplified as $\widehat{\*w}$ and
$\widehat{\Pi}^\tau:=\rho\*u\otimes\widehat{\*w},$
and the regularizing terms $\tau\dv(\rho\*u)$ and $-\*q^\tau$ are omitted. Recall that, in general, the QHD regularization shows marks of success in the cases where the Mach numbers are not high.

\par For the regularized QHD system of PDEs, the additional balance PDEs for the mass, kinetic and internal energies of the mixture hold
\begin{eqnarray}
 \partial_t\rho + \dv(\rho(\*u-\widehat{\*w}))=0,
\nonumber\\[1mm]
\tfrac12\partial_t(\rho|\*u|^2)+\tfrac12\dv\big(\rho|\*u|^2(\*u-\widehat{\*w})\big)+\*u\cdot\nabla p
\nonumber\\[1mm]
 =(\dv(\Pi^{NS}+\widehat{\Pi}^\tau))\cdot\*u+\rho\*f\cdot\*u,
\nonumber\\[1mm]
 \partial_t(\rho\ve)+\dv(\rho\ve(\*u-\widehat{\*w}))+p\dv\*u
\nonumber\\[1mm]
 =\dv(-\*q^F+p\widehat{\*w})+(\Pi^{NS}+\widehat{\Pi}^\tau):\nabla\*u-\rho\*f\cdot\widehat{\*w}+Q
\label{int en QHD}
\end{eqnarray}
which are simplified versions of the above corresponding PDEs \eqref{sum mass regul}-\eqref{int en regul}.

\par The $k$-th component specific entropy $s_k(r_k,\ve_k)$ is defined by the thermodynamic equations
\begin{eqnarray}
 \partial_{r_k}s_k=-\frac{p_k}{r_k^2\theta},\ \
 \partial_{\ve_k}s_k=\frac{1}{\theta},\ \ k=1,2;
\label{entropy def}
\end{eqnarray}
an explicit expression for $s_k$ is available but we do not need it.
Then the mixture specific entropy $s$ is given by the relation $\rho s=\langle\rho_k s_k\rangle$.
\begin{proposition}
For the regularized QHD system of PDEs, the balance PDE for the mixture entropy with the non-negative entropy production
\begin{eqnarray}
 \partial_t(\rho s)+ \dv\Big(\rho s(\*u-\widehat{\*w})+\frac{1}{\theta}\*q^F\Big)
\nonumber\\[1mm]
 =\frac{1}{\theta}\Big\{\frac{\mu}{2}|\nabla\*u+(\nabla\*u)^T\big|_F^2+\Big(\lambda-\frac23\mu\Big)(\dv\*u)^2
\nonumber\\[1mm]
 +\frac{\vk}{\theta}|\nabla\theta|^2]
 +\frac{\rho}{\tau}|\widehat{\*w}|^2+Q\Big\}\geq 0
\label{entropy eq}
\end{eqnarray}
is valid, where $|\cdot|_F$ is the Frobenius norm.
\end{proposition}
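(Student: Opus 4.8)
The plan is to combine the mass balance PDEs \eqref{mass eq alpha QHD}, the mixture internal-energy balance PDE \eqref{int en QHD}, the thermodynamic relations \eqref{entropy def} and the identity $\langle\alpha_k\rangle=1$ (recall that $\theta,p,\alpha_k,r_k,\ve_k$ are recovered from the QHD unknowns via \eqref{p and temp}, \eqref{form for alpha k}, so they are legitimate functions of $(x,t)$). Write $\*j:=\*u-\widehat{\*w}$ and, for a scalar field $\phi$, set $\dot\phi:=\partial_t\phi+\*j\cdot\nabla\phi$. Since \eqref{mass eq alpha QHD} reads $\partial_t\rho_k+\dv(\rho_k\*j)=0$, for any field $\phi_k$ one has $\partial_t(\rho_k\phi_k)+\dv(\rho_k\phi_k\*j)=\rho_k\dot\phi_k$, and in particular $\dot\rho_k=-\rho_k\dv\*j$. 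Applying $\langle\cdot\rangle$ first with $\phi_k=s_k$ and then with $\phi_k=\ve_k$, and recalling $\rho s=\langle\rho_ks_k\rangle$, $\rho\ve=\langle\rho_k\ve_k\rangle$, gives $\partial_t(\rho s)+\dv(\rho s\*j)=\langle\rho_k\dot s_k\rangle$ and $\langle\rho_k\dot\ve_k\rangle=\partial_t(\rho\ve)+\dv(\rho\ve\*j)$.

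Next I would rewrite $\langle\rho_k\dot s_k\rangle$. From \eqref{entropy def} comes the Gibbs identity $\theta\dot s_k=\dot\ve_k-(p_k/r_k^2)\dot r_k$; multiplying by $\rho_k=\alpha_kr_k$, eliminating $\alpha_k\dot r_k$ through $\dot\rho_k=r_k\dot\alpha_k+\alpha_k\dot r_k$, and using $\dot\rho_k=-\rho_k\dv\*j$ yields $\theta\rho_k\dot s_k=\rho_k\dot\ve_k+p_k(\dot\alpha_k+\alpha_k\dv\*j)$. Since the pressure is common, $p_k=p$, and since $\langle\alpha_k\rangle=1$ forces $\langle\dot\alpha_k\rangle=0$, summation collapses this to $\theta\langle\rho_k\dot s_k\rangle=\langle\rho_k\dot\ve_k\rangle+p\dv\*j$. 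This identity is where the quasi-homogeneous one-pressure structure is genuinely used, and I expect it to be the only step that is not routine bookkeeping.

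Finally, substitute $\langle\rho_k\dot\ve_k\rangle=\partial_t(\rho\ve)+\dv(\rho\ve\*j)$ into the last relation and use \eqref{int en QHD} to replace it by $-p\dv\*u+\dv(-\*q^F+p\widehat{\*w})+(\Pi^{NS}+\widehat{\Pi}^\tau):\nabla\*u-\rho\*f\cdot\widehat{\*w}+Q$. Because $\dv\*j=\dv\*u-\dv\widehat{\*w}$, the combination $-p\dv\*u+p\dv\*j$ becomes $-p\dv\widehat{\*w}$, which together with $\dv(p\widehat{\*w})$ gives $\widehat{\*w}\cdot\nabla p$; the Fourier term is moved into the flux via $\theta^{-1}\dv(-\*q^F)=\dv(-\*q^F/\theta)+\vk\theta^{-2}|\nabla\theta|^2$ (using $-\*q^F=\vk\nabla\theta$), which reproduces exactly the entropy flux $\rho s(\*u-\widehat{\*w})+\*q^F/\theta$ of \eqref{entropy eq} and its $\vk\theta^{-1}|\nabla\theta|^2$ term. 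It then remains to expand the algebraic sources: from \eqref{Pi NS q F}, the contractions $(\nabla\*u+(\nabla\*u)^T):\nabla\*u=\tfrac12|\nabla\*u+(\nabla\*u)^T|_F^2$ and $\mathbb{I}:\nabla\*u=\dv\*u$ give $\Pi^{NS}:\nabla\*u=\tfrac\mu2|\nabla\*u+(\nabla\*u)^T|_F^2+(\lambda-\tfrac23\mu)(\dv\*u)^2$, while $\widehat{\Pi}^\tau=\rho\*u\otimes\widehat{\*w}$ gives $\widehat{\Pi}^\tau:\nabla\*u=\rho\widehat{\*w}\cdot((\*u\cdot\nabla)\*u)$, whence $\widehat{\*w}\cdot\nabla p+\widehat{\Pi}^\tau:\nabla\*u-\rho\*f\cdot\widehat{\*w}=\rho\widehat{\*w}\cdot\big((\*u\cdot\nabla)\*u+\rho^{-1}\nabla p-\*f\big)=\tfrac\rho\tau|\widehat{\*w}|^2$ by the definition of $\widehat{\*w}$ in \eqref{wk what}. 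Collecting the pieces yields \eqref{entropy eq}; non-negativity of the production follows from $\mu\geq 0$, $\lambda\geq 0$, $\vk\geq 0$, $Q\geq 0$, $\rho>0$, $\tau>0$, together with $\tfrac\mu2|\nabla\*u+(\nabla\*u)^T|_F^2\geq\tfrac23\mu(\dv\*u)^2$ for $n\leq 3$, so that $\tfrac\mu2|\nabla\*u+(\nabla\*u)^T|_F^2+(\lambda-\tfrac23\mu)(\dv\*u)^2\geq\lambda(\dv\*u)^2\geq 0$.
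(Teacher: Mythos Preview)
Your argument is correct and follows essentially the same route as the paper's proof: both derive $\theta\langle\rho_k\dot s_k\rangle=\langle\rho_k\dot\ve_k\rangle+p\dv(\*u-\widehat{\*w})$ from the component mass balances, the Gibbs relations \eqref{entropy def}, the splitting $\rho_k=\alpha_kr_k$, and the one-pressure constraint $p_k=p$ together with $\langle\alpha_k\rangle=1$, then feed in the internal-energy balance \eqref{int en QHD} and rearrange the Fourier and $\widehat{\*w}$ terms exactly as you describe. Your material-derivative bookkeeping is a tidy packaging of the same manipulations, and your explicit verification of non-negativity via $\tfrac{\mu}{2}|\nabla\*u+(\nabla\*u)^T|_F^2\geq\tfrac{2}{3}\mu(\dv\*u)^2$ is a detail the paper leaves implicit.
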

\begin{proof}
We use the balance PDE \eqref{mass eq alpha QHD} for the density of the $k$th component, the thermodynamic equations \eqref{entropy def}, the formula $\rho_k=\alpha_kr_k$ and then the balance PDE \eqref{mass eq alpha QHD} once more and get
\begin{eqnarray}
 \partial_t(\rho_ks_k)+ \dv(\rho_ks_k(\*u-\widehat{\*w}))=\rho_k\partial_ts_k+\rho_k(\*u-\widehat{\*w})\cdot\nabla s_k
\nonumber\\[1mm]
 =\rho_k\Big[-\frac{p_k}{r_k^2\theta}(\partial_tr_k+(\*u-\widehat{\*w})\cdot\nabla r_k)
\nonumber\\[1mm]
 +\frac{1}{\theta}(\partial_t\ve_k+(\*u-\widehat{\*w})\cdot\nabla \ve_k)
 \Big]
\nonumber\\[1mm]
=\frac{1}{\theta}\Big[-\frac{p_k}{r_k}\alpha_k(\partial_tr_k+(\*u-\widehat{\*w})\cdot\nabla r_k)
\nonumber\\[1mm]
+\partial_t(\rho_k\ve_k)+ \dv(\rho_k\ve_k(\*u-\widehat{\*w}))
\Big].
\label{entropy k eq}
\end{eqnarray}
Next, the same balance PDE \eqref{mass eq alpha QHD} for the density of the $k$th component with $\rho_k=\alpha_kr_k$ implies
\[
 \alpha_k(\partial_tr_k+(\*u-\widehat{\*w})\cdot\nabla r_k)
 =-\big(\partial_t\alpha_k+\dv(\alpha_k(\*u-\widehat{\*w})\big)r_k.
\]

\par We apply $\langle\cdot\rangle$ to Eq. \eqref{entropy k eq}, use the last formula and the equations $p_k=p$ and $\langle\alpha_k\rangle=1$ and find
\begin{eqnarray*}
 \partial_t\langle\rho_k s_k\rangle+ \dv(\langle\rho_k s_k\rangle(\*u-\widehat{\*w}))
\nonumber\\[1mm]
 =\frac{1}{\theta}\big[p\big(\partial_t\langle\alpha_k\rangle+\dv(\langle\alpha_k\rangle(\*u-\widehat{\*w})\big)
\\[1mm]
 +\partial_t\langle\rho_k\ve_k\rangle+ \dv(\langle\rho_k\ve_k\rangle(\*u-\widehat{\*w}))\big]
\nonumber\\[1mm]
 =\frac{1}{\theta}\big[p\dv(\*u-\widehat{\*w})
 +\partial_t(\rho\ve)+\dv(\rho\ve(\*u-\widehat{\*w}))\big].
\label{entropy eq 1}
\end{eqnarray*}

\par Finally, due to the balance PDE \eqref{int en QHD} for the 
internal energy of the mixture, we obtain
\begin{eqnarray*}
 \partial_t(\rho s )+ \dv(\rho s (\*u-\widehat{\*w}))
=\frac{1}{\theta}[\dv(-\*q^F)+\nabla p\cdot\widehat{\*w}
\\[1mm]
+(\Pi^{NS}+\rho\*u\otimes\widehat{\*w}):\nabla\*u-\rho\*f\cdot\widehat{\*w}+Q\big]
\nonumber\\
=\dv\Big(-\frac{1}{\theta}\*q^F\Big)+\frac{\vk}{\theta^2}|\nabla\theta|^2
+\frac{1}{\theta}\Big(\Pi^{NS}:\nabla\*u+\frac{\rho}{\tau}|\widehat{\*w}|^2+Q\Big),
\label{entropy eq 2}
\end{eqnarray*}
that implies the balance PDE for the mixture entropy \eqref{entropy eq}.
\end{proof}

Note that the specific stiffened gas equations of state were not used in the last Proposition.

\section{Finite-difference schemes for the 1D regularized systems of PDes}
\par 
Further, we consider the 1D case with $\Omega=(-X,X)$ and define the main and auxiliary uniform meshes
\begin{eqnarray*}
\bar{\omega}_h=\{x_i=-X+ih;\,0\leq i\leq N\},
\\[1mm]
\omega_h^*=\{x_{i+1/2}=-X+(i+0.5)h;\,0\leq i\leq N-1\},
\end{eqnarray*}
on $[-X,X]$, with the step $h=2X/N$.
We also define the nonuniform mesh $\bar{\omega}^{\Delta t}=\{t_0=0<t_1<\ldots<t_{\overline{m}}=t_{fin}\}$ in time, with the steps $\Delta t_{m}=t_{m+1}-t_m$.
Let $\omega_h=\bar{\omega}_h\backslash\{-X,X\}$ and $\check{\omega}^{\Delta t}=\bar{\omega}^{\Delta t}\backslash \{t_{fin}\}$.
\par Denote by $H(\omega)$ the space of functions given on a mesh $\omega$.
For $v\in H(\bar{\omega}_h)$, $w\in H(\omega_h^*)$ and $y\in H(\bar{\omega}^\tau)$,
we introduce the averages and difference quotients
\begin{eqnarray*}
 [v]_{i+1/2}=0.5(v_i+v_{i+1}),\ \
 v_{i+1/2}=\frac{v_{i+1}-v_i}{h},
\\[1mm]
 [w]^*_i=0.5(w_{i-1/2}+w_{i+1/2}),\ \
  \delta^*w_i=\frac{w_{i+1/2}-w_{i-1/2}}{h},
\\[1mm]
 \delta_ty^m=\frac{y^{m+1}-y^m}{\Delta t_{m}},
\end{eqnarray*}
where $v_i=v(x_i)$, $w_{i+1/2}=w(x_{i+1/2})$ and $y^m=y(t_m)$.
Let also $v_{-,i+1/2}=v_i$, $v_{+,i+1/2}=v_{i+1}$ and $\hat{y}^m=y^{m+1}$.

\par For simplicity, suppose that
the body force is absent: $\*f=0$ (the general case can be covered as well, see \cite{Z12CMMP,ZL23}).
We consider the regularized QGD balance PDEs \eqref{mass eq alpha QGD}--\eqref{energy eq QGD} in the 1D case
and construct the following explicit two-level in time and symmetric three-point in space discrete balance equations for the mass of the components and the momentum and total energy of the gas mixture
\begin{eqnarray}
 \delta_t\rho_k+\delta^*\big([\rho_k]([u]-w_k)\big)=0,\ \ k=1,2,
\label{mass dis}\\[1mm]
\delta_t(\rho u)+\delta^*\big([\rho]([u]-w)[u]+[p]\big)=\delta^*\Pi,
\label{imp dis}\\[1mm]
\delta_t\big(\tfrac12\rho u^2+\rho\ve\big)
\nonumber\\[1mm]
+\delta^*\big\{\big(\tfrac12\rho u_-u_+ +[\rho\ve]+[p]\big)([u]-w)
\nonumber\\[1mm]
-\tfrac14h^2(\delta p)\delta u\big\}
=\delta^*(-q+\Pi[u])+[Q]^*
\label{energy dis}
\end{eqnarray}
on $\omega_h\times\check{\omega}^{\Delta t}$.
Here the main sought functions $\rho_1>0,\rho_2>0$, $u$ and $\ve$ (in fact, $\rho\ve$), along with the functions $p$ and $\theta$, are defined on the main mesh $\bar{\omega}_h\times\bar{\omega}^{\Delta t}$.
Also
$p$ and $\theta$ are given respectively by the first formula \eqref{p and temp} and formula \eqref{2nd form for temp},
with $d=b^2+4c$ (or see formula \eqref{second form for d} for $d$) and their coefficients
defined by \eqref{rho sigmak}, \eqref{coeff b} and \eqref{coeff c}.
In computations below, we also use formula \eqref{form for alpha k} for $\alpha_k$.

\par In Eq. \eqref{energy dis}, the nonstandard term $u_-u_+$ (close to the geometric mean for $u^2$) instead of $[u^2]$ or $[u]^2$ and the additional term
$-(1/4)h^2(\delta p)\delta u$ allows us to ensure a more natural form of the important discrete balance equation for the mixture internal energy like in \cite{Z12CMMP} without the spatial mesh imbalances, see
Proposition~\ref{prop: add discr balance eqs} below.

\par We discretize the regularizing velocities \eqref{wk what}-\eqref{w} in the form
\begin{eqnarray}
w_k=\frac{[\tau]}{[\rho_k]}\,[u]\delta(\rho_k u)+\widehat{w},\ \
 \widehat{w}=\frac{[\tau]}{[\rho]}\,([\rho][u]\delta u+\delta p),
\label{w k discr}\\[1mm]
 w=\Big\langle\frac{[\rho_k]}{[\rho]}w_k\Big\rangle
 =\frac{[\tau]}{[\rho]}\,[u]\delta(\rho u)+\widehat{w}
\label{w discr}
\end{eqnarray}
with $k=1,2$ and the viscous stress and heat flux, see \eqref{Pi NS q F} and \eqref{Pi tau}-\eqref{term of q tau}, as follows
\begin{eqnarray}
 \Pi=\nu\delta u
 +[u][\rho]\widehat{w}
\nonumber\\[1mm]
 +[\tau]\Big([u]\delta p+[\rho c_s^2]\delta u-\frac{[c_s^2]}{[\gamma c_V][\theta]}Q\Big),
\label{pi hom dis}
\end{eqnarray}
\begin{eqnarray}
-q=\vk\delta\theta
\nonumber\\[1mm]
 +[\tau]\Big\{\Big(\delta(\rho\ve)-\frac{[\rho\ve]+[p]}{[\rho]}\delta\rho\Big)[u]^2-Q[u]\Big\}.
\label{q tau hom dis}
\end{eqnarray}
Here the squared speed of sound $c_s^2$ is given by the second formula
in \eqref{for speed of sound}, and $c_V$ and $\gamma$ are introduced in \eqref{coeff of mixture}.
The functions $w_k$, $\widehat{w}$, $w$,
$\Pi$, $\nu=(4/3)\mu+\lambda$, $q$, $\vk$ and $Q$ are defined on the auxiliary mesh $\omega_h^*\times\bar{\omega}^{\Delta t}$, but $\tau$ is defined on $\bar{\omega}_h\times\bar{\omega}^{\Delta t}$.
We take $\tau$, $\nu$ and $\vk$ in the form
\begin{eqnarray}
 \tau=\frac{ah}{c_s+i_\tau|u|},\ \
 \nu= a_S[\tau][p],\ \
 \vk= a_{Pr}[\tau][c_p][p]
\label{tau mu lam kappa}
\end{eqnarray}
that is formally analogous to the single-component gas case \cite{E07}.
So $\tau$ is $h$-dependent, $\nu$ and $\vk$ are artificial viscosity coefficients,
with the parameter $a>0$, the Schmidt and inverse Prandtl numbers for the mixture $a_S\geq 0$ and $a_{Pr}>0$
used as adjusting numerical parameters, and $i_\tau=0$ or 1.
In computations below, we only need $i_\tau=0$.
\par The initial data $(\rho_1,\rho_2,u,\rho\ve)=(\rho_1^0,\rho_2^0,u^0,(\rho\ve)^0)$ (or equi\-va\-lent ones) are given on $\bar{\omega}_h$.
One can find on $\omega_h$ sequentially $\hat{\rho}_1$ and $\hat{\rho}_2$ from Eq. \eqref{mass dis}, next $\hat{\rho}\hat{u}$ and then $\hat{u}$ from Eq. \eqref{imp dis} and
finally $(1/2)\hat{\rho}\hat{u}^2+\widehat{\rho\ve}$ and then $\widehat{\rho\ve}$ from Eq. \eqref{energy dis}.
In computations below, we also
put the boundary values $\vp_0=\vp_1$ and $\vp_N=\vp_{N-1}$ for $\vp=\hat{\rho}_1,\hat{\rho}_2,\hat{u}$ and $\widehat{\rho\ve}$.

\par The above spatial discretization is notably simpler than the entropy correct one constructed in Section~4 in \cite{ZL23} in the case of the homogeneous mixture of the perfect polytropic gases.
The discretization of the total energy balance PDE in \cite{ZL23} has been based on the original regularized QGD multi-velocity and multi-temperature model, but it is not yet available for the heterogeneous mixtures.
Also, here we use the simplest averages of $\rho_k$ and $\rho\ve$ in all the terms.
\begin{proposition}
\label{prop: add discr balance eqs}
The following discrete balance equations for the mass, kinetic and internal energies of the mixture
\begin{eqnarray}
 \delta_t\rho+\delta^*j=0,\ \ j:=[\rho]([u]-w),
\label{mass mix dis}\\[1mm]
 \tfrac12\delta_t(\rho u^2)-\tfrac{\Delta t}{2}\hat{\rho}(\delta_tu)^2
 +\tfrac12\delta^*(ju_-u_+)+(\delta^*[p])u
\nonumber\\[1mm]
 =(\delta^*\Pi)u,
\label{kin energy mix dis}\\[1mm]
  \delta_t(\rho\ve)+\tfrac{\Delta t}{2}\hat{\rho}(\delta_tu)^2
 +\delta^*(j[\ve])
\nonumber\\[1mm]
 =-\delta^*q+[\Pi\delta u]^*-p\delta^*([u]-w)+[w\delta p]^*+[Q]^*
\label{internal energy mix dis}
\end{eqnarray}
are valid on $\omega_h\times\check{\omega}^{\Delta t}$, cf. the corresponding balance PDEs \eqref{sum mass regul}, \eqref{kin en regul} and \eqref{int en regul}.
\end{proposition}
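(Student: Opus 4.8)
The plan is to establish the three identities in succession by elementary discrete integration-by-parts manipulations on the staggered meshes $\bar\omega_h$ and $\omega_h^*$, using only the discrete scheme \eqref{mass dis}--\eqref{energy dis} and the definitions \eqref{w k discr}--\eqref{q tau hom dis}. First, for the mixture mass balance \eqref{mass mix dis} I would apply $\langle\cdot\rangle$ to the component mass equations \eqref{mass dis}; by linearity of $\delta_t,\delta^*$ and the defining relation $[\rho]w=\langle[\rho_k]w_k\rangle$ from \eqref{w discr}, the summed convective flux collapses to $\langle[\rho_k]([u]-w_k)\rangle=[\rho][u]-[\rho]w=j$, which is exactly \eqref{mass mix dis}.

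Second, for \eqref{kin energy mix dis} I would multiply the discrete momentum equation \eqref{imp dis} by the nodal value $u$ and invoke two discrete chain-rule identities. The temporal one,
\[
 u\,\delta_t(\rho u)=\tfrac12\delta_t(\rho u^2)-\tfrac{\Delta t}{2}\hat\rho(\delta_tu)^2+\tfrac12u^2\delta_t\rho,
\]
follows by writing $\hat u=u+\Delta t\,\delta_tu$ and expanding the right-hand side; the spatial one,
\[
 u\,\delta^*(j[u])=\tfrac12\delta^*(j\,u_-u_+)+\tfrac12u^2\delta^*j,
\]
follows by a direct two-node computation and is precisely what dictates the nonstandard factor $u_-u_+$ in \eqref{energy dis}. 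Inserting the already-proved mass balance in the form $\delta_t\rho=-\delta^*j$ cancels the two $u^2$-terms, and the pressure contribution $u\,\delta^*[p]$ is carried along unchanged, yielding \eqref{kin energy mix dis}.

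Third, for \eqref{internal energy mix dis} I would subtract \eqref{kin energy mix dis} from the discrete total energy equation \eqref{energy dis}: the temporal part leaves $\delta_t(\rho\ve)+\tfrac{\Delta t}{2}\hat\rho(\delta_tu)^2$, the kinetic part $\tfrac12\delta^*(j\,u_-u_+)$ of the energy flux cancels, and the $\rho\ve$-flux survives as $\delta^*(j[\ve])$. For the Navier--Stokes and regularizing stress terms I would use the product rule $\delta^*(\Pi[u])-(\delta^*\Pi)u=[\Pi\,\delta u]^*$, valid for any $\Pi\in H(\omega_h^*)$ and $u\in H(\bar\omega_h)$, which isolates $[\Pi\,\delta u]^*$ and leaves $-\delta^*q+[Q]^*$ untouched. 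The remaining pressure block $\delta^*([p]([u]-w))-\tfrac14h^2\delta^*((\delta p)(\delta u))-u\,\delta^*[p]$ I would treat with the identities $\delta^*([p]v)=p\,\delta^*v+[(\delta p)v]^*$ for $v=[u]-w\in H(\omega_h^*)$ and $[(\delta p)[u]]^*=u\,\delta^*[p]+\tfrac14h^2\delta^*((\delta p)(\delta u))$ (the latter from $[u]_{i\pm1/2}=u_i\pm\tfrac h2(\delta u)_{i\pm1/2}$ together with $\delta^*[p]=[\delta p]^*$); combining them turns the block into $p\,\delta^*([u]-w)-[w\,\delta p]^*$, which upon transfer to the right-hand side gives exactly $-p\,\delta^*([u]-w)+[w\,\delta p]^*$. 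Collecting the pieces produces \eqref{internal energy mix dis}.

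Most of these steps are short though numerous; the one genuinely delicate point is the pressure block, where it is the cancellation of $\tfrac14h^2\delta^*((\delta p)(\delta u))$ and the emergence of $[w\,\delta p]^*$ (rather than $[([u]-w)\,\delta p]^*$) that forces the extra term $-\tfrac14h^2(\delta p)(\delta u)$ in the flux of \eqref{energy dis} in the first place. Keeping the staggered-mesh bookkeeping straight — which averages and difference quotients act on $\bar\omega_h$ versus $\omega_h^*$, and the repeated use of $[\delta v]^*=\delta^*[v]$ — is the place where an error is most likely to slip in.
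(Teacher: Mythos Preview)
Your proposal is correct and follows essentially the same route as the paper's proof: sum the component mass equations, multiply the discrete momentum equation by the nodal $u$ and use the temporal and spatial discrete chain rules, then subtract from the total-energy equation and apply the staggered product rules for the $\Pi[u]$, $[p][u]$ and $[p]w$ terms. The only cosmetic difference is that for the kinetic-energy convective term the paper passes through the intermediate quantity $\delta^*(j[u]^2)$ and the identity $-\tfrac12(\delta^*j)u^2=-\delta^*(j[u]^2)+\tfrac12\delta^*(ju_-u_+)+[j[u]\delta u]^*$, whereas you use the equivalent one-line identity $u\,\delta^*(j[u])=\tfrac12\delta^*(ju_-u_+)+\tfrac12u^2\delta^*j$; the pressure block is likewise handled with the same product rules, merely grouped differently.
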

\begin{proof}
Applying $\langle\cdot\rangle$ to the discrete balance Eq. \eqref{mass dis} and using formula \eqref{w discr}, we derive the discrete balance equation for the mixture mass \eqref{mass mix dis}.

\par We also multiply the discrete balance Eq. \eqref{mass dis} by $u$ and apply the known formula
\[
 \delta_t(\rho u)
 =\tfrac12\delta_t(\rho u^2)+\tfrac12(\delta_t\rho)u^2-\tfrac12\Delta t\hat{\rho}(\delta_tu)^2.
\]
Using also Eq. \eqref{mass mix dis} and transformations
\begin{eqnarray*}
 \tfrac12(\delta_t\rho)u^2=-\tfrac12(\delta^*j)u^2=-\tfrac12\big(\delta^*(j[u^2])-[j\delta(u^2)]^*\big)
\\[1mm]
 =-\delta^*(j[u]^2)+\tfrac12\delta^*(ju_-u_+)+[j[u]\delta u],
\\[1mm]
 \delta^*(j[u])u=\delta^*(j[u]^2)-[j[u]\delta u]^*,
\end{eqnarray*}
see Section 2.2 in \cite{Z12CMMP}, we get the discrete balance equation for the 
kinetic energy of the mixture
\eqref{kin energy mix dis}.

\par Subtracting it from the discrete balance Eq. \eqref{energy dis} for the total energy and using formulas
\begin{eqnarray*}
 \delta^*\big([p][u]-\tfrac14h^2(\delta p)\delta u\big)=(\delta^*[p])u+p\delta^*[u],
\\[1mm]
 \delta^*(w[p])=(\delta^*w)p+[w\delta p]^*,\\[1mm]
 \delta^*(\Pi[u])=(\delta^*\Pi)u+[\Pi\delta u]^*.
\end{eqnarray*}
see Section 2.2 in \cite{Z12CMMP}, we obtain the last discrete balance Eq.
\eqref{internal energy mix dis}.
\end{proof}

\par Below the time steps are chosen automatically according to the formulas
\begin{eqnarray}
\Delta t_{m}=\frac{\beta h}{\max_i(c_{si}^m+|u_i^m|)},\ 0\leq m<\overline{m}-1,
\label{dtm}\\[1mm]
\Delta t_{\overline{m}-1}=t_{fin}-t_{\overline{m}-1}\leq\frac{\beta h}{\max_i(c_{si}^{\overline{m}-1}+|u_i^{\overline{m}-1}|)},
\label{dtmneg}
\end{eqnarray}
where $\beta>0$ is the Courant-type parameter.
Note that the conditions for linearized $L^2$-dissipativity of the constructed scheme in the case of the single-component perfect polytropic gas follow from \cite{ZL18}.

\par For the simplified QHD regularization, the above constructed scheme is reduced as
\begin{eqnarray*}
 \delta_t\rho_k+\delta^*\big([\rho_k]([u]-\widehat{w})\big)=0,\ \ k=1,2,
\label{mass dis qhd}\\[1mm]
\delta_t(\rho u)+\delta^*\big([\rho]([u]-\widehat{w})[u]+[p]\big)=\delta^*\widehat{\Pi},
\label{imp dis qhd}\\[1mm]
\delta_t\big(\tfrac12\rho u^2+\rho\ve\big)
\nonumber\\[1mm]
+\delta^*\big\{\big(\tfrac12\rho u_-u_+ +[\rho\ve]+[p]\big)([u]-\widehat{w})-\tfrac14h^2(\delta p)\delta u\big\}
\nonumber\\[1mm]
=\delta^*(\vk\delta\theta+\widehat{\Pi}[u])+[Q]^*
\label{energy dis qhd}
\end{eqnarray*}
on $\omega_h\times\check{\omega}^{\Delta t}$, with $\widehat{\Pi}:=[u][\rho]\widehat{w}$ and $\widehat{w}$ defined in \eqref{w k discr}.
Below we apply it as well.

\section{Numerical experiments}

The aim of this Section is to verify practically the above constructed QGD and QHD regularizations. As usual, we
first accomplish it in the 1D case. We present results of seven numerical experiments for tests with shock waves.
The tests are taken from \cite{KLC14,LF11,CBS17IJNMF,YC13,LA12,ABR20}. The physical interpretation of the tests presented is discussed in the referenced sources, and we do not dwell much on it. The stiffened gas parameters for all the tests are collected in Table \ref{tab:sgparams}.
\begin{table}
\caption{\label{tab:sgparams}Stiffened gas parameters in seven tests}
\begin{ruledtabular}
\begin{tabular}{ccccc}
{Substance}&$\gamma$
&$c_v$, J/(kg K)&$p_\infty$, Pa&  $\ve_0$, J/kg \\[1mm]
\hline
\multicolumn{5}{l}{{A. Air-to-water shock tube problem}\cite{KLC14}}\\[1mm]
\hline
Air & 1.4 & 717.5&0&0\\[1mm]
Water & 2.8 & 1495&$8.5\cdot 10^8$&0\\[1mm]
\hline
\multicolumn{5}{l}{{B. Water-to-air shock tube problem}\cite{LF11}}\\[1mm]
\hline
Air & 1.4 & 720&0&0\\[1mm]
Water & 2.8 & 1495&$8.5\cdot 10^8$&0\\[1mm]
\hline
\multicolumn{5}{l}{{C. Shock tube test with a mixture containing mainly water vapor}\cite{CBS17IJNMF,BCPCA22}}\\[1mm]
\multicolumn{5}{l}{{D. Shock tube test with a vanishing liquid phase}\cite{CBS17IJNMF}}\\[1mm]
\multicolumn{5}{l}{{E. Shock tube test with a mixture containing mainly liquid water}\cite{CBS17IJNMF}}\\[1mm]
\hline
Water vapor & 1.43 & 1040&0&$\hphantom{-}2030\cdot 10^3$\\[1mm]
Water & 2.35 & 1816 &$10^9$&$-1167\cdot 10^3$\\[1mm]
\hline
\multicolumn{5}{l}{{F. Dodecane vapor-to-liquid shock tube}\cite{YC13}}\\[1mm]
\hline
Vapor & 1.025 & 1956 &0&$-237 \cdot 10^3$\\[1mm]
Liquid & 2.35 & 1077&$4\cdot 10^8$&$-755\cdot 10^3$\\[1mm]
\hline
\multicolumn{5}{l}{{G. Carbon dioxide depressurization}\cite{LA12,ABR20}}\\[1mm]
\hline
Vapor & 1.06 & 2410 &$8.86\cdot 10^5$&$-3.01 \cdot 10^5$\\[1mm]
Liquid & 1.23 & 2440&$1.32\cdot 10^8$&$-6.23\cdot 10^5$\\[1mm]
\end{tabular}
\end{ruledtabular}
\end{table}

\subsection{Air-to-water shock tube problem}

A 10 m long tube is separated into two halves, and the initial discontinuity is located in the middle of the tube. The left half is filled with air, and the right half is filled with water. The initial conditions are given by the formulas
\[
(p_0,u_0,\theta_0)=
\begin{cases}
(10^{9}\text{ Pa}, 0 \text{ m/s}, 308.15\text{ K}), & -5 \leq x < 0
\\[1mm]
(10^5\text{ Pa}, 0 \text{ m/s}, 308.15\text{ K}), & 0 < x \leq 5
\end{cases}.
\]

For numerical purposes, we use almost pure phases: $\alpha_1 = 1-10^{-5}$ in the left half and $\alpha_1=10^{-5}$ in the right half.
The plots that show the total density $\rho$ of the mixture, along with the mass $y_1$ and volume $\alpha_1$ fractions of the gas phase, pressure $p$, velocity $u$, and absolute temperature $\theta$ of the mixture, are depicted in Fig. \ref{fig:kitamura} for the QGD regularization and final time $t_{fin}=2$ ms (the same sought functions for other tests are presented in subsequent figures except for tests C to E where the more representative plots of $\alpha_2$ are given instead of $\alpha_1$). Recall that $N$ is the number of partition segments of $\overline{\Omega}$, the parameters  $a_{S}$, $a_{Pr}$ and $\beta$ are used in formulas \eqref{tau mu lam kappa} and \eqref{dtm}-\eqref{dtmneg}.
The most standard values of the Schmidt and inverse Prandtl numbers $a_{S}=1$ and $a_{Pr}=1$ are taken, and quality of the solution is quite good. The same values are taken below except where noted.

Note that we have a sort of a parasitic invariability segment in the velocity around the rarefaction wave in air. The plot of the average velocity in \cite{KLC14} also had a segment that slightly differed from the invariability segment in the solution, but that defect occurred around the shock in water.
It seems that the reason of this effect is that we use a one-velocity and one-temperature model instead of the two-velocity and two-temperature one from \cite{KLC14}.
Nonetheless, although the corresponding six-equation model is far more complicated compared to the four-equation one presented in this paper, the results of the numerical experiments are quite close. In this test, notice that $y_1$ and $\alpha_1$  practically coincide, and this  occasional circumstance
explains the success of computations in \cite{ESh22}.

Also, if we take $a_S=0$, the computation runs normally and quality of the solution is preserved. In the case of the QHD regularization, quality of the computed pressure and temperature is slightly worse. In general, the scope of applicability for the QHD regularization turns out to be narrower than for the QGD one, and the QGD regularization allows achieving better results.
\begin{figure*}[hbt!]
\center{\includegraphics[width=1\linewidth]{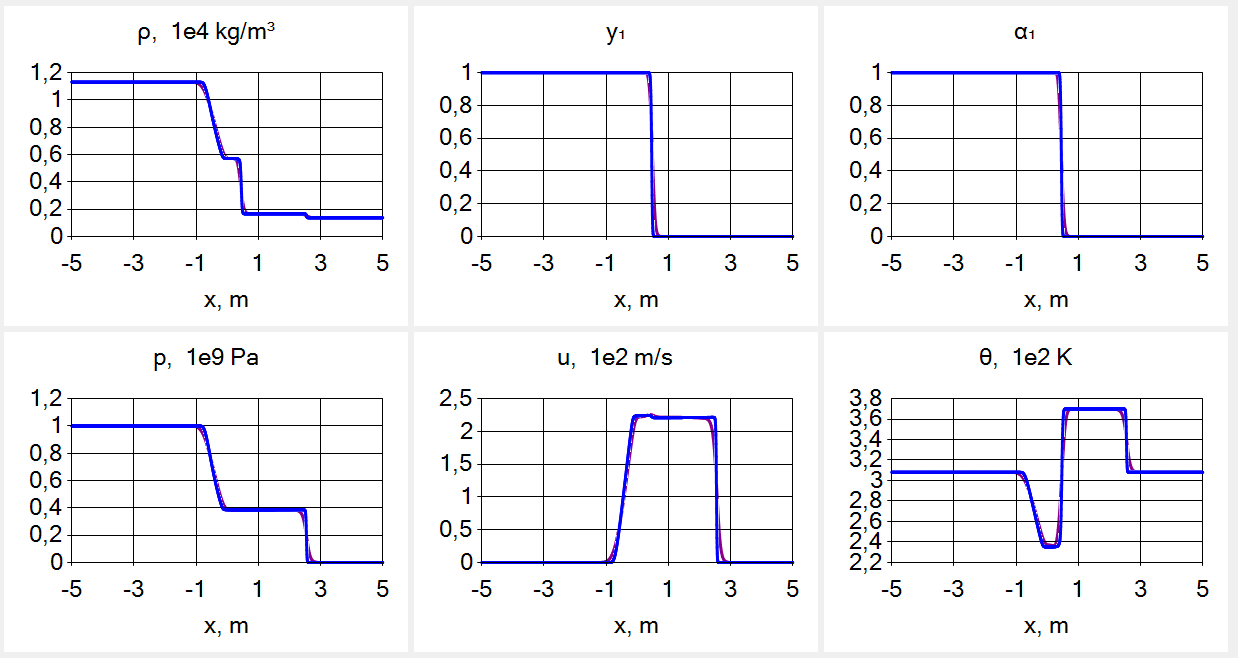}}
\caption{Numerical results for air-to-water shock tube problem
(test A for $N= 300$ (dark magenta), $2000$ (blue), $a=0.3$ and $\beta=0.2$ (the QGD regularization)}
\label{fig:kitamura}
\end{figure*}

\subsection{Water-to-air shock tube problem}

In this test, we again have a 10 m long tube separated into two halves, both of which contain a mixture of air and water but in different proportions.  The initial conditions are given by the formulas
\[
(p_0,u_0,\theta_0)=
\left\{
\begin{array}{rr}
(2\cdot10^{7}\text{ Pa}, 0 \text{ m/s}, 308.15\text{ K}), & -5 \leq x < 0\\[1mm]
\hphantom{2\cdot\,}(10^7\text{ Pa}, 0 \text{ m/s}, 308.15\text{ K}), & 0 < x \leq 5
\end{array}
\right.,
\]
 and we have $\alpha_1=0.25$ in the left half and $\alpha_1=0.75$ in the right half. The results are presented for $t_{fin}=6$ ms in Fig. \ref{fig:lifu} for the QGD regularization.

Notice that $a=2$ is taken, thus, $a>1$.
Although the computation does not fail if we take $0<a<1$ (that is the most often used interval), the quality of the solutions turns out to be much worse. The greater we take the value of $a$, the more smoothed the solution becomes.
For $a=0.5$ and $N=500$, we observe several oscillations in $\rho$, $y_1$ and $\alpha_1$ in a very close proximity to the left of the shock. Those oscillations diminish but do not vanish completely for $N=2500$ and look as ``fingers'' of a rather small height at the point of discontinuity.
For $a=1$, those negative effects have a lesser scale, but they are still observable; we omit the corresponding figures for brevity.

\par If we take $a_S=0$, the computation runs normally and quality of the solution is preserved.
In this test, we also see that $y_1$ and $\alpha_1$ are essentially different.
The QHD regularization fails to compute this test.
\begin{figure*}[hbt!]
\center{\includegraphics[width=1\linewidth]{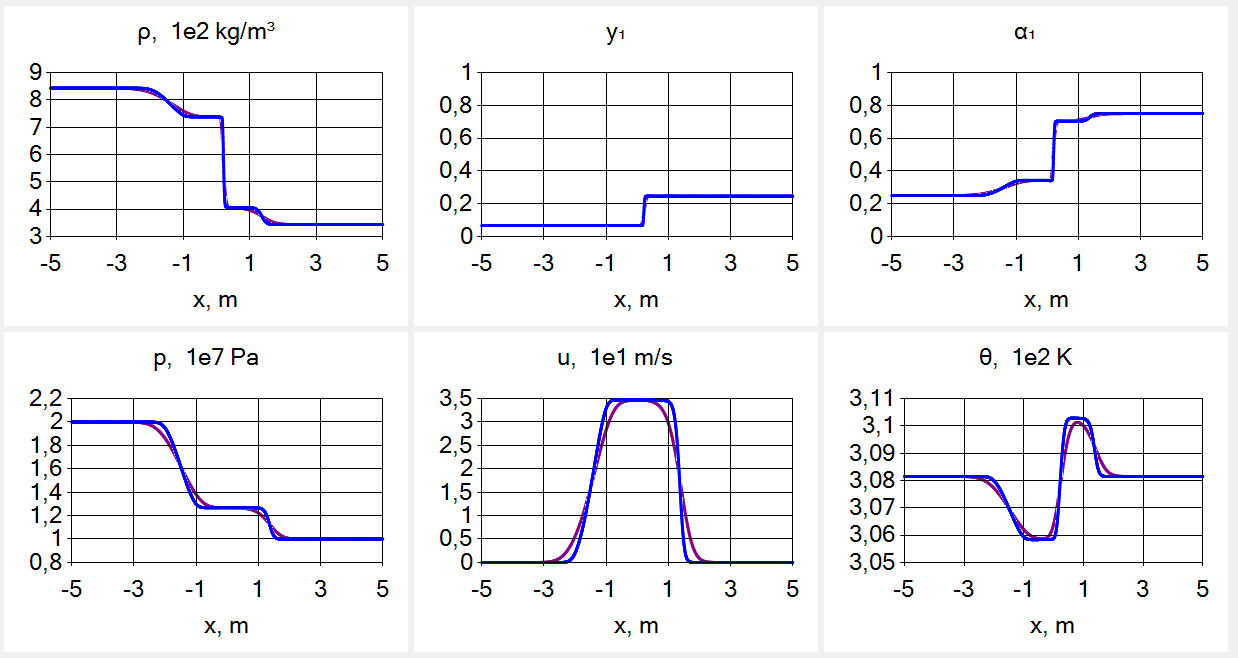}}
\caption{Numerical results for water-to-air shock tube (test B)
 for $N= 500$ (dark magenta), $2500$ (blue), $a=2$ and $\beta=0.1$ (the QGD regularization)}
\label{fig:lifu}
\end{figure*}

\par In this test, we also study an error of the constructed scheme.
Since the exact solution is unavailable, in a standard manner, we first compute the pseudo-exact solution for the fine mesh with $N=32000$.
Then we compute the numerical solutions for $N=250,500,1000,2000,4000$ and find the corresponding scaled (divided by $2X$) mesh $L^1$-norms $e_N(r)$ of the difference between the pseudo-exact solution and numerical one, for the functions $r=\rho$, $y_1$, $\alpha_1$, $p$, $u$ and $\theta$, together with the
corresponding practical error orders
$o(r)=o_N(r)=\log_2\big(e_{N/2}(r)/e_N(r)\big)$ for $N\geq 500$.
The results are put in Tables II and III.
The errors monotonically decrease and the orders slightly increase as $N$ grows.
Notice that
$0.476\leq o_{500}(r)\leq 0.585$,
$0.607\leq o_{1000}(r)\leq 0.679$,
$0.621\leq o_{2000}(r)\leq 0.702$ and
$0.720\leq o_{4000}(r)\leq 0.805$
for all $r$ that is rather normal since the exact solution is discontinuous.
Moreover, $o_N(p)$ and $o_N(u)$ are very close and are maximal for all $N$ except $N=1000$ whereas $o_N(y_1)$ is minimal for all $N$ except $N=1000$.
\begin{table}
\caption{\label{tab:errors 1}$L_1$-errors and error orders for $\rho$, $y_1$ and $\alpha_1$ (test B)}
\begin{ruledtabular}
\begin{tabular}{ccccccc}
$N$ & $e_N(\rho)/10^2$ & $o(\rho)$ & $e_N(y_1)$ & $o(y_1)$ & $e_N(\alpha_1)$ & $o(\alpha_1)$\\[1mm]
\hline
 250 & 7.5213E-02 & -- & 1.0362E-03 & -- & 6.6917E-03 & --\\[1mm]
 \hline
 500 & 5.1918E-02 & 0.535 & 7.4500E-04 & 0.476 & 4.5815E-03 & 0.547\\[1mm]
 \hline
1000 & 3.3670E-02 & 0.625 & 4.6544E-04 & 0.679 & 2.9417E-03 & 0.639\\[1mm]
 \hline
 2000 & 2.1165E-02 & 0.670 & 3.0272E-04 & 0.621 & 1.8497E-03 & 0.669\\[1mm]
 \hline
 4000 & 1.2372E-02 & 0.775 & 1.8382E-04 & 0.720 & 1.0827E-03 & 0.773\\[1mm]
 \end{tabular}
\end{ruledtabular}
\end{table}
\begin{table}
\caption{\label{tab:errors 2}$L_1$-errors and error orders for $p$, $u$ and $\theta$ (test B)}
\begin{ruledtabular}
\begin{tabular}{ccccccc}
 $N$ & $e_N(p)/10^7$ & $o(p)$ & $e_N(u)/10^1$ & $o(u)$ & $e_N(\theta)/10^2$ & $o(\theta)$\\[1mm]
\hline
 250 & 3.5121E-02	& -- & 2.5293E-01 & -- &	2.648273E-03 & --\\[1mm]
 \hline
 500 & 2.3413E-02	& 0.585	& 1.6859E-01&0.585&1.804264E-03&	0.554\\[1mm]
 \hline
1000 & 1.5060E-02 & 0.637 & 1.0832E-01 & 0.638 & 1.184704E-03	& 0.607\\[1mm]
 \hline
 2000 & 9.2575E-03 & 0.702 & 6.6605E-02 & 0.702 & 7.476341E-04 & 0.664\\[1mm]
 \hline
 4000 & 5.3013E-03 & 0.804 & 3.8112E-02 & 0.805 & 4.425483E-04	& 0.756\\[1mm]
\end{tabular}
\end{ruledtabular}
\end{table}

\subsection{Shock tube test with a mixture containing mainly water vapor}

We take a 1 m long tube filled with a mixture that contains mainly water vapor with the mass fraction $y_1=0.8$ in the entire tube. In this test, and tests D and E below as well, the volume fraction $\alpha_1$ is computed via formula \eqref{vol fr mass fr}. The initial conditions are as follows
\begin{gather*}
(p_0,u_0,\theta_0)=
\left\{
\begin{array}{rr}
(2\cdot10^{5}\text{ Pa}, 0 \text{ m/s}, 394.2489\text{ K}),  -5 \leq x < 0\hphantom{.}
\\[1mm]
(10^5\text{ Pa}, 0 \text{ m/s}, 372.8827\text{ K}),\hphantom{-}0 < x \leq 5.
\end{array}
\right.
\end{gather*}

The results are shown for $t_{fin}=0.8$ ms in Fig. \ref{fig:chiapolino61qhd} for the QHD regularization, and they are in perfect agreement with \cite{CBS17IJNMF,BCPCA22}.
Note that $y_1$ and $\alpha_1\approx 1$ are both almost constant but they are different; also $\alpha_2$ is rather small, but its behavior is nontrivial.
Hereafter, if the results for the QHD regularization are presented, then the corresponding results for the QGD regularization are always
of at least the same quality.

Also, in the QGD case, if we increase the value of $a$, we can also take $a_S=0$. Quality of the solution remains at the same level.
\begin{figure*}[hbt!]
\center{\includegraphics[width=1\linewidth]{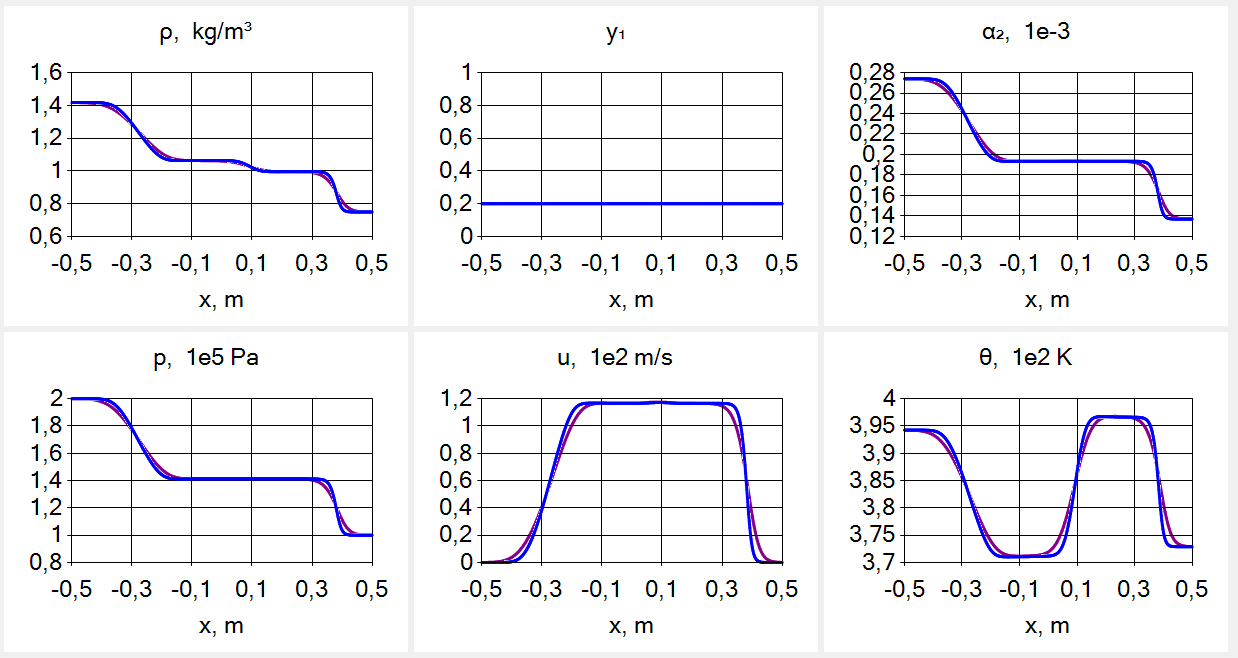}}
\caption{Numerical results for shock tube with a mixture containing mainly water vapor
(test C) for $N= 200$ (dark magenta), $500$ (blue), $a=0.8$ and $\beta=0.2$ (the QHD regularization)}
\label{fig:chiapolino61qhd}
\end{figure*}

\subsection{Shock tube test with a vanishing liquid phase}

In this test, a 1 m long tube is filled with a mixture with an almost vanishing liquid phase ($y_2=0.01$ in the entire tube). The initial conditions are given by the formulas
\[
(p_0,u_0,\theta_0)=
\left\{
\begin{array}{rr}
(2\cdot10^{5}\text{ Pa}, 0 \text{ m/s}, 395\text{ K}), & -5 \leq x < 0\\[1mm]
(10^5\text{ Pa}, 0 \text{ m/s}, 375\text{ K}), & 0 < x \leq 5
\end{array}
\right..
\]

The results are presented for $t_{fin}=0.5$ ms in Fig. \ref{fig:chiapolino62} for the QHD regularization, and they are in perfect agreement with \cite{CBS17IJNMF}.
Note that $y_1$ and $\alpha_1\approx 1$ are both almost constant and close to each other; also though $\alpha_2$ is even smaller than in test C, its behavior is still nontrivial.
In addition, in the QGD case, if we increase the value of $a$, we can also take $a_S=0$; quality of the solution remains at the same level.
\begin{figure*}[hbt!]
\center{\includegraphics[width=1\linewidth]{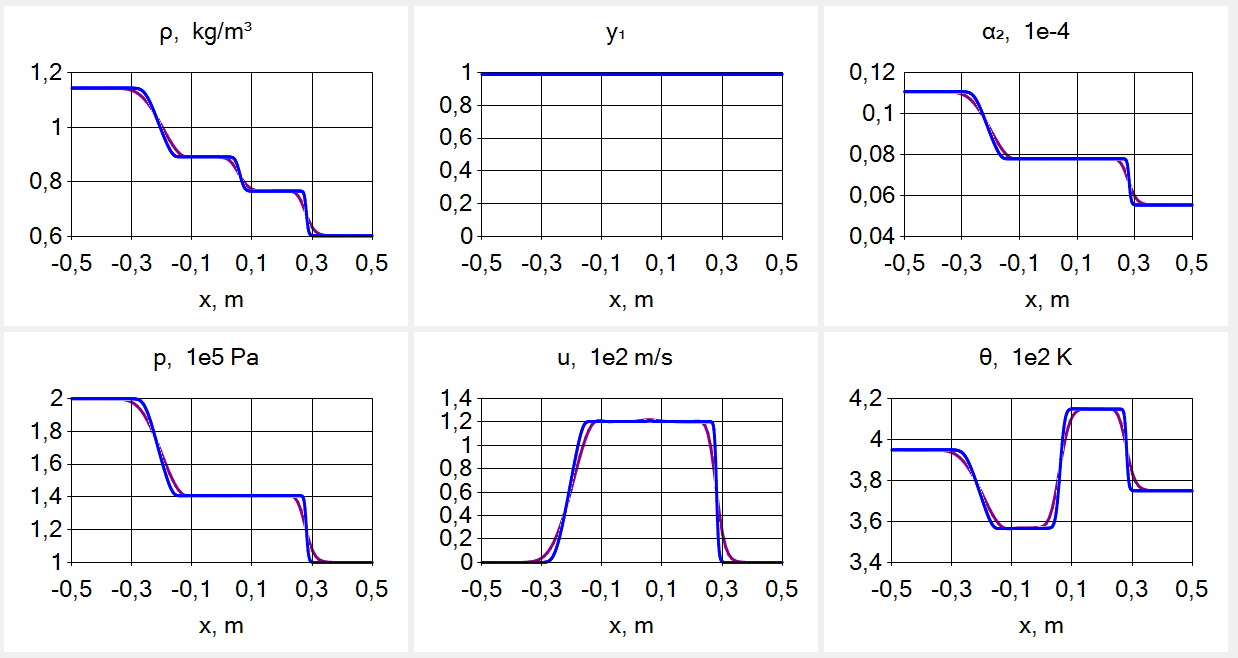}}
\caption{Numerical results for shock tube with a vanishing liquid phase  (test D)
 for $N=100$ (dark magenta), $500$ (blue), $a=0.2$ and $\beta=0.2$ (the QGD regularization)}
\label{fig:chiapolino62}
\end{figure*}

\subsection{Shock tube test with a mixture containing mainly liquid water}
We deal with a 1 m long tube filled with a mixture that contains mainly liquid water with $y_2=0.8$ in the entire tube.

The initial conditions are as follows
\[
(p_0,u_0,\theta_0)=
\left\{
\begin{array}{rr}
(2\cdot10^{5}\text{ Pa}, 0 \text{ m/s}, 395\text{ K}), & -5 \leq x < 0\\[1mm]
(10^5\text{ Pa}, 0 \text{ m/s}, 375\text{ K}), & 0 < x \leq 5
\end{array}
\right..
\]

The results are demonstrated for $t_{fin}=1.5$ ms in Fig. \ref{fig:chiapolino63qhd} for the QHD regularization, and they are in perfect agreement with \cite{CBS17IJNMF}.
The situation with $y_1$, $\alpha_1\approx 1$ and $\alpha_2$ is close to test C, though now $\alpha_2$ is larger.
Once again, in the QGD case, if we increase the value of $a$, we can also take $a_S=0$; quality of the solution remains at the same level.
\begin{figure*}[hbt!]
\center{\includegraphics[width=1\linewidth]{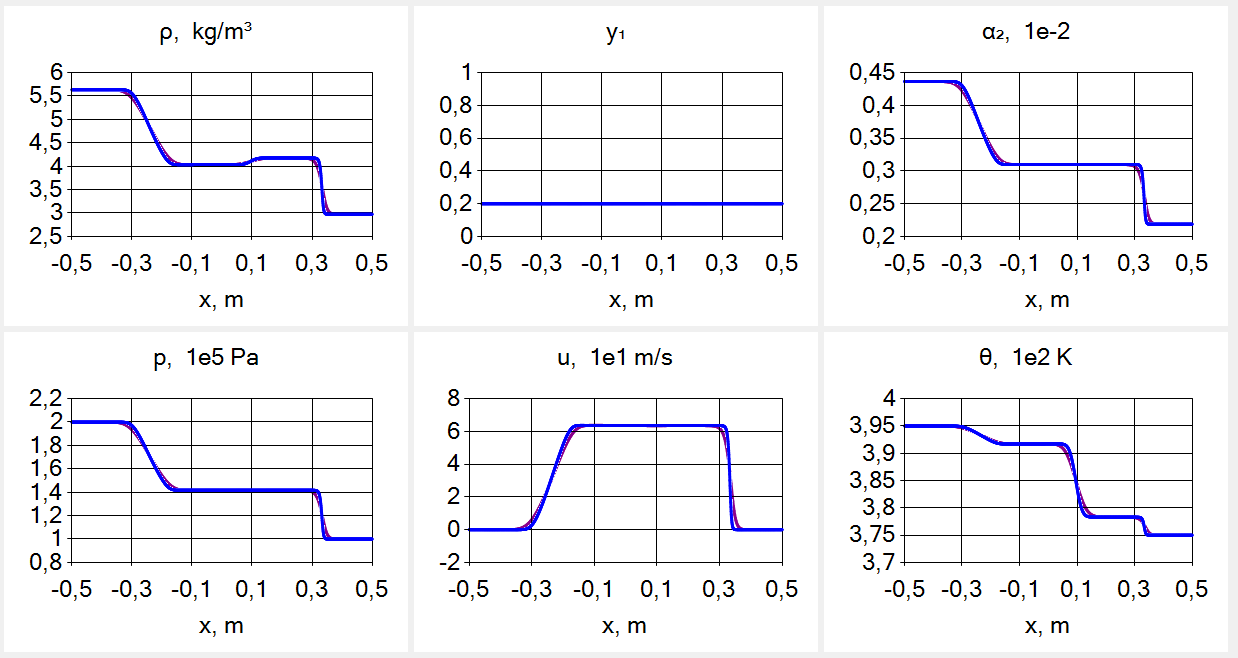}}
\caption{Numerical results for shock tube with a mixture containing mainly liquid water
(test E) for $N= 500$ (dark magenta), $1500$ (blue), $a=0.8$ and $\beta=0.3$ (the QHD regularization)}
\label{fig:chiapolino63qhd}
\end{figure*}

\subsection{Dodecane vapor-to-liquid shock tube}
We consider the dodecane vapor-liquid shock tube solved in \cite{YC13}: a 10 m long shock tube is filled with vapor dodecane under high pressure at the left, and with the liquid dodecane under atmospheric pressure at the right.
The initial discontinuity is set at a distance of 3 m from the left end, and the initial conditions are given by
\[
(p_0,u_0,\theta_0)=
\left\{
\begin{array}{rr}
(10^{10}\text{ Pa}, 0 \text{ m/s}, 308.15\text{ K}), & -5 \leq x < -2\\[1mm]
(10^5\text{ Pa}, 0 \text{ m/s}, 308.15\text{ K}), & -2 < x \leq 5
\end{array}
\right..
\]
The pure fractions are used, i.e., $\alpha_1=1$ in the left half and $\alpha_1=0$ in the right half. In this test, the parameters $a_{Pr}^{-1}=1$ and $0.2$ were taken. Quality of the results is better in the latter case. The results of those computations are presented for $t_{fin}=5$ ms in Fig. \ref{fig:yeomchang} for the QGD regularization.
The results obtained from the QHD regularization are slightly worse than in the QGD case.
Those results have small nonphysical gaps in the plots of temperature and gas volume fraction.
Nonetheless, the overall quality of the solution under this regularization remains at an acceptable level.
Here, $y_1$ and $\alpha_1$ are again close to each other like in test~A.

The six-equation system used in \cite{YC13}  is more complicated than the four-equation one presented in this paper. The numerical profiles of the depicted functions correspond well to those presented in \cite{YC13} regarding quality, although the shock wave in our model propagates faster.

Also, in the QGD case, we can take $a_S=0$ without increasing $a$ and with the same quality of the results.
\begin{figure*}[hbt!]
\center{\includegraphics[width=1\linewidth]{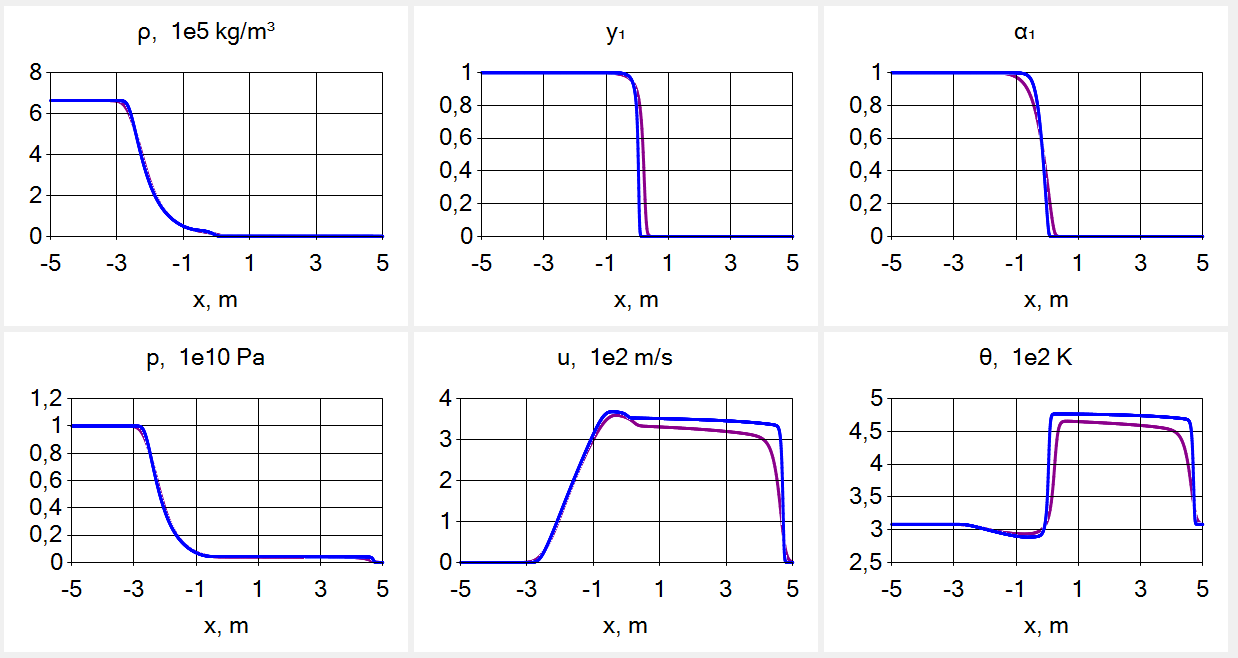}}
\caption{Numerical results for dodecane vapor-to-liquid shock tube (test F)
 for $N=500$ (dark magenta), $2000$ (blue), $a=0.9$ and $\beta=0.1$ (the QGD regularization)}
\label{fig:yeomchang}
\end{figure*}

\subsection{Carbon dioxide depressurization}
Here, we simulate depressurization of 
a 160-m long pipe that is filled with pure carbon dioxide, see \cite{LA12,ABR20}. The comparison of the results with the seven-equation model is given in \cite{LA12}. The pipe is filled with liquid carbon dioxide at the left, and with the gas carbon dioxide at the right.
The initial discontinuity is set at a distance of 50 m from the left, and the initial conditions are as follows:
\[
(p_0,u_0,\theta_0)=
\left\{
\begin{array}{rr}
(6\cdot10^{6}\text{ Pa}, 0 \text{ m/s}, 283.13\text{ K}),  -40 \leq x < 10\hphantom{.}\\[1mm]
(10^6\text{ Pa}, 0 \text{ m/s}, 283.13\text{ K}),   \hphantom{-}10 < x \leq 40.
\end{array}
\right.
\]

The results are presented for $t_{fin}=0.08$ s in Fig. \ref{fig:abgrall} for the QGD regularization. For numerical purposes, we use almost pure phases: $\alpha_1 = 10^{-6}$ in the left half and $\alpha_1=1-10^{-6}$ in the right half. Here, $y_1$ and $\alpha_1$ are again piecewise constant and close to each other like in test A.

In this test, we had to take $a_{Pr}^{-1}=0.1$.
For the QGD regularization, the quality of the solution is good and corresponds well to both the results of \cite{LA12} and \cite{ABR20}. However, the QHD regularization behaves worse in this case.
If we only double the value of $a$, the computations fail. After we both double $a$ and halve $\beta$, the computation is completed successfully. However, the quality of the velocity and the temperature plots worsens.

In the QGD case, if we take $a_S=0$, the computations run normally and quality of the solution is preserved.
\begin{figure*}[hbt!]
\center{\includegraphics[width=1\linewidth]{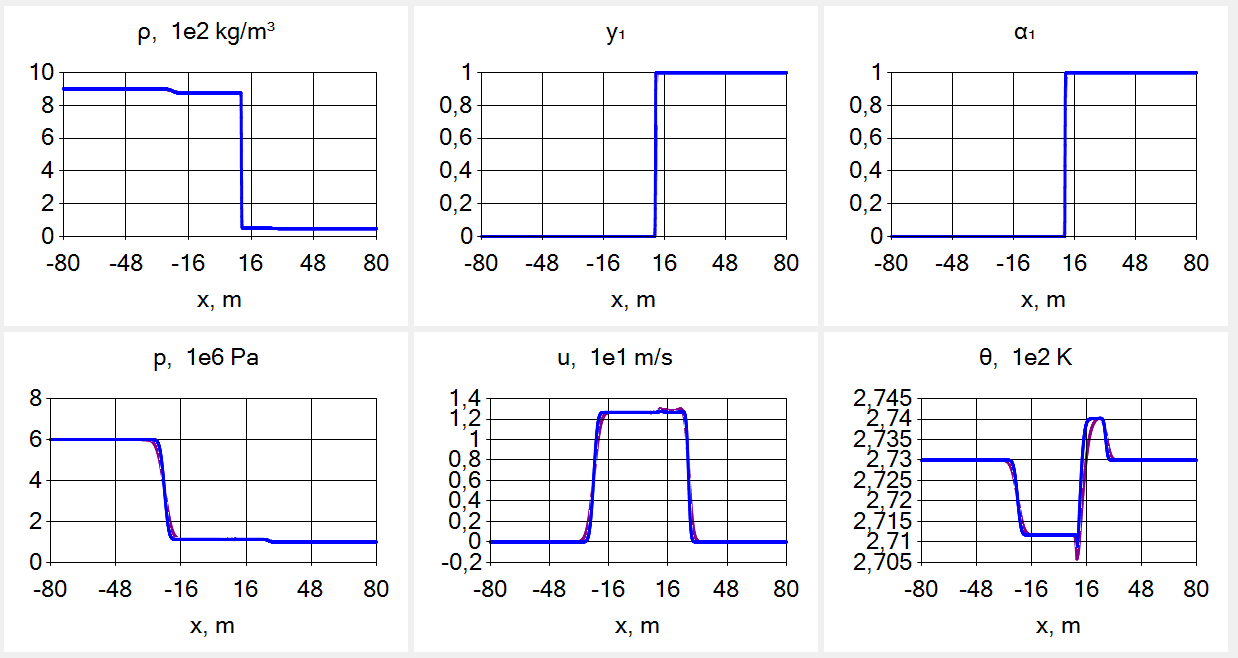}}
\caption{Numerical results for carbon dioxide depressurization (test G)
 for $N=1200$ (dark magenta), $4000$ (blue), $a=0.8$ and $\beta=0.1$ (the QGD regularization)}
\label{fig:abgrall}
\end{figure*}

\section{CONCLUSION}
In this paper, we have taken the four-equation model describing the dynamics of heterogeneous compressible binary mixtures in the case of the common velocity, temperature and pressure of the components, with the stiffened gas equations of state.
We have studied its known quasi-homogeneous form, with the excluded volume concentrations and the quadratic equation for the common pressure.
Namely, we have justified the correct choice of the physical root of the equation and presented two new rather simple expressions for the squared speed of sound and the related balance PDE for the pressure.
We have also compared several known formulas for the squared speed of sound in mixtures used in literature and found that the speed in the present model is minimal of them.

\par Next, the problem of constructing two regularizations of some well-known types for a heterogeneous mixture model has been solved for the first time by exploiting that quasi-homogeneous form. Some properties of these regularizations have been given too.
In the 1D case, new explicit two-level in time and symmetric three-point in space finite-difference schemes without limiters have been constructed based on the regularizations.
Numerical results for a number of test flows with shock waves known in literature have been given using these schemes.

\par The considered model for the heterogeneous binary mixtures can rather easily be generalized and successfully applied to important problems with phase transition\cite{LeMSN14,CBS17IJNMF,CBS17CF,ABR20,BCPCA22,DSPB22,P22}. The successful results of our numerical tests open the possibility to solve problems of this type and 2D and 3D problems in the frame of our regularization-based approach in future.

\begin{acknowledgments}
This study was supported by the Russian Science Foundation, grant no. 22-11-00126
(A.~Zlotnik, Sections II and III)
and
by the Moscow Center of Fundamental and Applied Mathematics Agreement with the Ministry of Science and Higher Education of the Russian Federation, grant no.~075-15-2022-283
(the both authors, Sections IV and V).
\end{acknowledgments}

\section*{Data Availability Statement}

The data that support the findings of this study are available from the corresponding author upon reasonable request.

\appendix
\section{Proofs of Propositions from Section \ref{Section II} }
\begin{proof}[Proof of Proposition \ref{the main}]
Eq. \eqref{eq for pk}
and formula \eqref{sg EOS cons1a} for $p$ imply that $p_k=p=p_+$ and
\begin{eqnarray}
 \sigma^{(k)}=\frac{\alpha_kR_kr_k}{c_V\rho}=\frac{\alpha_k(p_k+p_{*k})}{c_V\rho\theta}
\nonumber\\
 =\frac{\alpha_k(p_+ +p_{*k})}{c_V\rho\theta},\ \ k=1,2,
\label{form for sigma k}\\
 \langle\sigma^{(k)}\rangle
 =\frac{p_+ +\langle\alpha_kp_{*k}\rangle}{c_V\rho\theta}=\gamma-1.
\label{form for l sigma k r}
\end{eqnarray}
Using formula \eqref{sg EOS cons2a} for $\rho(\ve-\ve_0)$, we obtain
\begin{eqnarray*}
 b=\frac{p_+ +\langle\alpha_kp_{*k}\rangle}{c_V\rho\theta}(c_V\rho\theta+\langle\alpha_kp_{*k}\rangle)
 \\[1mm]
 -\frac{p_+\langle\alpha_kp_{*k}\rangle +\langle\alpha_kp_{*k}^2\rangle}{c_V\rho\theta}
 -\langle p_{*k}\rangle
\\[1mm]
 =p_+ +\frac{\langle\alpha_kp_{*k}\rangle^2-\langle\alpha_kp_{*k}^2\rangle}{c_V\rho\theta}
 -\langle(1-\alpha_k)p_{*k}\rangle.
\end{eqnarray*}
Taking into account the formula
\begin{eqnarray*}
 \langle\alpha_kp_{*k}\rangle^2-\langle\alpha_kp_{*k}^2\rangle
 =(\alpha_1^2-\alpha_1)p_{*1}^2
 +(\alpha_2^2-\alpha_2)p_{*2}^2
 \\[1mm]
 +2\alpha_1\alpha_2 p_{*1}p_{*2}
 =-\alpha_1\alpha_2\Delta_*^2,
\end{eqnarray*}
we derive the first formula \eqref{form for b and c}.

\par Formulas \eqref{sg EOS cons2a} and $\langle\alpha_k\rangle=1$ lead to the expressions
\begin{equation}
 \begin{array}{r}\rho(\ve-\ve_0)-p_{*1}=c_V\rho\theta+\alpha_2\Delta_*,
 \\[1mm]
 \rho(\ve-\ve_0)-p_{*2}=c_V\rho\theta-\alpha_1\Delta_*.\end{array}
\label{form for mult of ak}
\end{equation}
Therefore, the following formula for $c$ holds
\begin{eqnarray*}
 c=(\sigma^{(1)}p_{*2}+\sigma^{(2)}p_{*1})c_V\rho\theta
 \\[1mm]
 +(\alpha_2\sigma^{(1)}p_{*2}-\alpha_1\sigma^{(2)}p_{*1})\Delta_*-p_{*1}p_{*2}.
\end{eqnarray*}
Invoking formula \eqref{form for sigma k}, we can rewrite its terms as
\begin{eqnarray}
\sigma^{(1)}p_{*2}+\sigma^{(2)}p_{*1})c_V\rho\theta\nonumber\\[1mm]
 =p_+(\alpha_1p_{*2}+\alpha_2p_{*1})+p_{*1}p_{*2},
\label{si 1 p2 +si 2 p1}
\\[1mm]
 \alpha_2\sigma^{(1)}p_{*2}-\alpha_1\sigma^{(2)}p_{*1}\nonumber\\[1mm]
 =\frac{\alpha_1\alpha_2[(p_+ +p_{*1})p_{*2}-(p_+ +p_{*2})p_{*1}]}{c_V\rho\theta}
 \nonumber\\[1mm]
 =\frac{\alpha_1\alpha_2 p_+\Delta_*}{c_V\rho\theta}
\nonumber
\end{eqnarray}
and find that
\[
 c=p_+\Big(\alpha_1p_{*2}+\alpha_2p_{*1}
 +\frac{\alpha_1\alpha_2}{c_V\rho\theta}\Delta_*^2\Big)=-p_+p_-.
\]
The rest of the Proposition is obvious.
\end{proof}
\begin{proof}[Proof of Proposition \ref{lem: d greater 0}]
The proof of formula \eqref{second form for d} is absent in \cite{LeMSN14}, so we give it for completeness.
Note that
\begin{eqnarray*}
 (b_1\pm b_2)^2=(a_1\pm a_2-(p_{*1}\pm p_{*2}))^2
 \\[1mm]
 =a_1^2+a_2^2-2(a_1p_{*1}+a_2p_{*2})+p_{*1}^2+p_{*2}^2\pm 2(a_1a_2-c)
\end{eqnarray*}
since $c=a_1p_{*2}+a_2p_{*1}-p_{*1}p_{*2}$.
Consequently, we have
\begin{eqnarray*}
d=(b_1+b_2)^2+4c\\[1mm]
=(b_1-b_2)^2+4(a_1a_2-c)+4c=(b_1-b_2)^2+4a_1a_2.
\end{eqnarray*}

\par Next, due to this formula and formulas \eqref{form for mult of ak}, setting $a:=c_V\rho\theta$, we obtain
\begin{eqnarray*}
 d=(\sigma^{(2)}(a-\alpha_1\Delta_*)-\sigma^{(1)}(a+\alpha_2\Delta_*)-\Delta_*)^2\\[1mm]
 +4\sigma^{(1)}\sigma^{(2)}(a+\alpha_2\Delta_*)(a-\alpha_1\Delta_*)
\\[1mm]
 =(\sigma^{(2)}(a-\alpha_1\Delta_*))^2+(\sigma^{(1)}(a+\alpha_2\Delta_*))^2\\[1mm]
 +2\sigma^{(1)}\sigma^{(2)}(a+\alpha_2\Delta_*)(a-\alpha_1\Delta_*)
\\[1mm]
 -2(\sigma^{(2)}(a-\alpha_1\Delta_*)-\sigma^{(1)}(a+\alpha_2\Delta_*))\Delta_*
 +\Delta_*^2
\\[1mm]
=\big[(\alpha_1\sigma^{(2)}-\alpha_2\sigma^{(1)})^2
+2(\alpha_1\sigma^{(2)}+\alpha_2\sigma^{(1)})+1\big]\Delta_*^2
\\[1mm]
+2a\big[-\alpha_1(\sigma^{(2)})^2+\alpha_2(\sigma^{(1)})^2
\\[1mm]
+(\alpha_2-\alpha_1)\sigma^{(1)}\sigma^{(2)}
+\sigma^{(1)}-\sigma^{(2)}\big]\Delta_*
+a^2(\sigma^{(1)}+\sigma^{(2)})^2.
\end{eqnarray*}
Since $\sigma^{(1)}+\sigma^{(2)}=\gamma-1$, see \eqref{rho sigmak}, we derive equality in \eqref{third form for d}.

\par The discriminant $d_0$ of the derived quadratic polynomial with respect to $\Delta_*$ is such that
\begin{eqnarray*}
\frac{1}{4a^2}d_0:=\big[(\alpha_2\sigma^{(1)}-\alpha_1\sigma^{(2)})(\sigma^{(1)}+\sigma^{(2)})+\sigma^{(1)}-\sigma^{(2)}\big]^2
\\[1mm]
-\big[(\alpha_1\sigma^{(2)}-\alpha_2\sigma^{(1)})^2
+2(\alpha_1\sigma^{(2)}+\alpha_2\sigma^{(1)})+1\big]\\[1mm]
\times(\sigma^{(1)}+\sigma^{(2)})^2
=2\big[(\alpha_2\sigma^{(1)}-\alpha_1\sigma^{(2)})((\sigma^{(1)})^2-(\sigma^{(2)})^2)
\\[1mm]
-(\alpha_1\sigma^{(2)}+\alpha_2\sigma^{(1)})(\sigma^{(1)}+\sigma^{(2)})^2\big]
-4\sigma^{(1)}\sigma^{(2)}
\\[1mm]
=-4\big[(\alpha_2 +\alpha_1)\sigma^{(1)}\sigma^{(2)}(\sigma^{(1)}+\sigma^{(2)})
+\sigma^{(1)}\sigma^{(2)})
\\[1mm]
=-4\sigma^{(1)}\sigma^{(2)}((\sigma^{(1)}+\sigma^{(2)}+1)<0.
\end{eqnarray*}
This proves the property $d>0$ once more independently of Proposition \ref{the main}.
\end{proof}

\par Now recall that, for any constant $a_k$ and $c_k$, $k=1,2$ (except for $c_1=c_2=0$), we get
\begin{eqnarray}
(\partial_t+\*u\cdot\nabla)\frac{\langle a_k\rho_k\rangle}{\langle c_k\rho_k\rangle}
 =\frac{1}{\langle c_k\rho_k\rangle^2}
 [\langle c_k\rho_k\rangle\langle a_k(\partial_t+\*u\cdot\nabla)\rho_k\rangle
 \nonumber\\[1mm]
 -\langle a_k\rho_k\rangle\langle c_k(\partial_t+\*u\cdot\nabla)\rho_k\rangle]=0
\label{der of rat fun}
\end{eqnarray}
provided that $\langle c_k\rho_k\rangle\neq 0$,
since $(\partial_t+\*u\cdot\nabla)\rho_k=-\rho_k\dv\*u$ due to the mass balance PDEs~\eqref{mass eq alpha NS}.
\begin{proof}[Proof of Proposition \ref{prop:speed of sound}]
We differentiate Eq. \eqref{quad eq for p} for $p=p_+$ and constant $\ve_0$ and $\sigma^{(k)}$, $k=1,2$, and get
\begin{eqnarray*}
\Big(\partial_\rho p_+ +\frac{p_+}{\rho^2}\partial_\ve p_+\Big)(2p_+-b)
\\[1mm]
=\Big(\partial_\rho b+\frac{p_+}{\rho^2}\partial_\ve b\Big)p_++\partial_\rho c+\frac{p_+}{\rho^2}\partial_\ve c
\\[1mm]
=\langle\sigma^{(k)}\rangle\Big(\ve-\ve_0+\frac{p_+}{\rho}\Big) p_+\\[1mm]
+(\sigma^{(1)}p_{*2}+\sigma^{(2)}p_{*1})\Big(\ve-\ve_0+\frac{p_+}{\rho}\Big).
\end{eqnarray*}
This formula leads to an intermediate formula
\begin{eqnarray}
 c_s^2=\frac{\langle\sigma^{(k)}\rangle p_+ +\sigma^{(1)}p_{*2}+\sigma^{(2)}p_{*1}}{2p_+-b}\frac{\rho(\ve-\ve_0)+p_+}{\rho}.
\label{form for cs2 aux}
\end{eqnarray}

\par Next, formulas \eqref{sg EOS cons2b}, \eqref{form for l sigma k r} and \eqref{si 1 p2 +si 2 p1}
allows us to write finally
\begin{eqnarray}
 c_s^2
\nonumber\\[1mm]
 =\frac{(p_+ +\langle\alpha_kp_{*k}\rangle)p_+ +p_+(\alpha_1p_{*2}+\alpha_2p_{*1})+p_{*1}p_{*2}}
 {c_V\rho\theta(2p_+-b)}
\nonumber\\[1mm]
\times\gamma c_V\theta
 =\frac{(p_+ +p_{*1})(p_+ +p_{*2})}{\rho\sqrt{d}}\gamma.
\label{form for cs2 aux 2}
\end{eqnarray}

\par Differentiating Eq. \eqref{quad eq for p} for $p=p_+$ again, we can write
\begin{eqnarray}
(\partial_tp_+ +\*u\cdot\nabla p_+)(2p_+-b)
\nonumber\\[1mm]
=(\partial_tb+\*u\cdot\nabla b)p_+ +\partial_tc+\*u\cdot\nabla c
\nonumber\\[1mm]
=[\partial_t(\rho(\ve-\ve_0))+\*u\cdot\nabla(\rho(\ve-\ve_0))]
\nonumber\\[1mm]
\times(\langle\sigma^{(k)}\rangle p_+
+\sigma^{(1)}p_{*2}+\sigma^{(2)}p_{*1}),
\label{aux eq for p_+}
\end{eqnarray}
where we have applied the auxiliary equations
\[
(\partial_t+\*u\cdot\nabla)\sigma^{(k)}=0,\ \ k=1,2,\ \ (\partial_t+\*u\cdot\nabla)\gamma=0
\]
following from Eq. \eqref{der of rat fun}.
Next, the balance PDE for the internal energy \eqref{int en homog} implies
\begin{eqnarray*}
\partial_t(\rho(\ve-\ve_0))+\*u\cdot\nabla(\rho(\ve-\ve_0))
\\[1mm]
=\partial_t(\rho(\ve-\ve_0))+\dv(\rho(\ve-\ve_0)\*u)-\rho(\ve-\ve_0)\dv\*u
\\[1mm]
=\partial_t(\rho\ve)+\dv(\rho\ve\*u)-\rho(\ve-\ve_0)\dv\*u
\\[1mm]
=-(\rho(\ve-\ve_0)+p_+)\dv\*u+\dv(\vk\nabla\theta)+\Pi:\nabla\*u+Q,
\end{eqnarray*}
where the following PDE
\[
 \partial_t(\rho\ve_0)+\dv(\rho\ve_0\*u)
=\langle(\partial_t\rho_k+\dv(\rho_k\*u))\ve_{0k}\rangle=0
\]
has been applied, see the mass balance PDEs \eqref{mass eq alpha NS}.
Now from Eq. \eqref{aux eq for p_+} due to
formulas \eqref{form for cs2 aux} and \eqref{sg EOS cons2b}, we finally get
\begin{eqnarray*}
\partial_tp_+ +\*u\cdot\nabla p_+
\\[1mm]
=-\rho c_s^2\dv\*u+\frac{c_s^2}{\gamma c_V\theta}(\dv(\vk\nabla\theta)+\Pi:\nabla\*u+Q),
\end{eqnarray*}
and the proof is complete.
\end{proof}
\begin{proof}[Proof of Proposition \ref{prop:speed of sound A}]
The quadratic equation \eqref{quad eq for p} implies
the formula for the differential of $p_+$:
\[
(2p_+ -b)dp_+=p_+ db+dc,
\]
where due to formulas \eqref{coeff b}, $\langle\sigma^{(k)}\rangle=\gamma-1$ and \eqref{coeff c} we get
\begin{eqnarray*}
db=\rho(\ve-\ve_0)d\gamma
+(\gamma-1)d(\rho\ve-\langle\ve_{0k}d\rho_k\rangle)
-\langle p_{*k}d\sigma^{(k)}\rangle
\nonumber\\
=\langle (\rho(\ve-\ve_0)-p_{*k})d\sigma^{(k)}\rangle
+(\gamma-1)(-\langle\ve_{0k}d\rho_k\rangle
+d(\rho\ve)),
\nonumber\\
dc=\rho(\ve-\ve_0)(p_{*2}d\sigma^{(1)}+p_{*1}d\sigma^{(2)})
\nonumber\\
+(\sigma^{(1)}p_{*2}+\sigma^{(2)}p_{*1})(d(\rho\ve)-\langle\ve_{0k}d\rho_k\rangle)
-p_{*1}p_{*2}d\gamma
\nonumber\\
=(\rho(\ve-\ve_0)-p_{*1})p_{*2}d\sigma^{(1)}
+(\rho(\ve-\ve_0)-p_{*2})p_{*1}d\sigma^{(2)}
\nonumber\\
+(\sigma^{(1)}p_{*2}+\sigma^{(2)}p_{*1})(-\langle\ve_{0k}d\rho_k\rangle+d(\rho\ve)).
\end{eqnarray*}
Consequently, we obtain
\begin{eqnarray*}
p_+ db+dc
\\
=\mathcal{H}_1d\sigma^{(1)}
+\mathcal{H}_2d\sigma^{(2)}
+\sqrt{d}\mathcal{P}(-\langle\ve_{0k}d\rho_k\rangle+d(\rho\ve))
\end{eqnarray*}
with the  functions
$\mathcal{H}_1$, $\mathcal{H}_2$ and $\mathcal{P}$ defined in the statement of Proposition \ref{prop:speed of sound A}.
Next, the straightforward calculation gives
\begin{eqnarray*}
d\sigma^{(1)}=\frac{R_1c_{V2}}{(c_V\rho)^2}(\rho_2d\rho_1-\rho_1d\rho_2),
\nonumber\\
d\sigma^{(2)}=-\frac{R_2c_{V1}}{(c_V\rho)^2}(\rho_2d\rho_1-\rho_1d\rho_2).
\end{eqnarray*}
Inserting these formulas in the previous one for $p_+ db+dc$, we derive formula \eqref{form for dp} for $dp_+$ with the functions
$\mathcal{P}_1$ and $\mathcal{P}_2$ defined in the statement of Proposition \ref{prop:speed of sound A}.

\par Now we can calculate
\begin{eqnarray*}
\Big\langle\frac{\rho_k}{\rho}\mathcal{P}_k\Big\rangle
+\frac{\rho\ve+p_+}{\rho}\mathcal{P}
=-\mathcal{P}\Big\langle\frac{\rho_k\ve_{0k}}{\rho}\Big\rangle+\frac{\rho\ve+p_+}{\rho}\mathcal{P}
\\
=\frac{(\gamma-1)p_+
+\sigma^{(1)}p_{*2}+\sigma^{(2)}p_{*1}}{\sqrt{d}}\frac{\rho(\ve-\ve_0)+p_+}{\rho}.
\end{eqnarray*}
Applying formulas \eqref{form for cs2 aux} and \eqref{form for cs2 aux 2}, we complete the proof.
\end{proof}
\begin{proof}[Proof of Proposition \ref{prop: 2nd formula for c_s^2}]
We differentiate the rational equation \eqref{orig eq for p} for $p=p_+$
under the previous assumption that $\ve_0$ and $\sigma^{(k)}$, $k=1,2$, are constant in \eqref{coeff b}-\eqref{coeff c}:
\begin{eqnarray*}
-\Big\langle\frac{\sigma^{(k)}(\rho(\ve-\ve_0)-p_{*k})}{(p_++p_{*k})^2}\Big\rangle\Big(\partial_\rho p_+ +\frac{p_+}{\rho^2}\partial_\ve p_+\Big)
\\
+\Big\langle\frac{\sigma^{(k)}(\ve-\ve_0)}{p_++p_{*k}}\Big\rangle
+\frac{p_+}{\rho^2}\Big\langle\frac{\sigma^{(k)}\rho}{p_++p_{*k}}\Big\rangle=0.
\end{eqnarray*}
Consequently, we first get
\begin{eqnarray*}
c_s^2=\partial_\rho p_++\frac{p_+}{\rho^2}\partial_\ve p_+
\\
=\Big\langle\frac{\sigma^{(k)}\big(\ve-\ve_0+\frac{p_+}{\rho}\big)}{p_++p_{*k}}\Big\rangle
\Big\langle\frac{\sigma^{(k)}(\rho(\ve-\ve_0)-p_{*k})}{(p_++p_{*k})^2}\Big\rangle^{-1}.
\end{eqnarray*}
The first term of the last expression is simplified as follows
\begin{eqnarray*}
\Big\langle\frac{\sigma^{(k)}\big(\ve-\ve_0+\frac{p_+}{\rho}\big)}{p_++p_{*k}}\Big\rangle
\\
=\frac{1}{\rho}\Big\langle\frac{\sigma^{(k)}(\rho(\ve-\ve_0)-p_{*k}+p+p_{*k})}{p_++p_{*k}}\Big\rangle
\\
=\frac{1}{\rho}(1+\langle\sigma^{(k)}\rangle)
=\frac{\gamma}{\rho}
\end{eqnarray*}
according to the rational equation \eqref{orig eq for p}
and the third relation \eqref{rho sigmak}, and the proof is complete.
\end{proof}

\par To show explicitly that two such different formulas for $c_s^2$ derived in Propositions \ref{prop:speed of sound} and \ref{prop: 2nd formula for c_s^2} coincide, we recall quantities $a_k$ and $b_k$ introduced in Proposition \ref{lem: d greater 0} and perform the following transformations
\begin{eqnarray*}
\Big\langle\frac{\sigma^{(k)}(\rho(\ve-\ve_0)-p_{*k})}{(p_+ +p_{*k})^2}\Big\rangle(p_+ +p_{*1})(p_+ +p_{*2})
\\
=\frac{a_1}{p_+ +p_{*1}}(p_+ +p_{*2})
+\frac{a_2}{p_+ +p_{*2}}(p_+ +p_{*1})
\\
=\Big(1-\frac{a_2}{p_+ +p_{*2}}\Big)(p_+ +p_{*2})
+\Big(1-\frac{a_1}{p_+ +p_{*1}}\Big)(p_+ +p_{*1})
\\
=2p_+ +p_{*1}+p_{*2}-a_1-a_2
\\
=2p_+ -(b_1+b_2)=2p_+ -b=\sqrt{d}.
\end{eqnarray*}
Here we have applied twice the rational equation  \eqref{orig eq for p} rewritten in the short form
$
 \big\langle\frac{a_k}{p +p_{*k}}\big\rangle=1.
$
\begin{proof}[Proof of Proposition \ref{prop: two speeds of sound}]
Concerning formula \eqref{cs cw frac}, see Proposition 6 in \cite{FMM10}.
But, within the framework of the quasi-homogeneous model, we find it important to present another proof based on the above formulas for $p_\pm$ and $c_s^2$.

First, we are going to verify that the first general formula \eqref{cp zeta} for $\zeta_k$ (see formula (102) in \cite{FMM10}) implies the second particular one.
Due to formulas \eqref{sg EOS}, we get
\begin{eqnarray}
 r_k=\frac{p_k+p_{*k}}{R_k\theta},\ \
  \ve_k(\theta,p_k)=c_{Vk}\theta+\frac{R_k p_{*k}\theta}{p_k+p_{*k}}+\ve_{0k},
\label{rk and vek in theta and pk}
\end{eqnarray}
for $k=1,2$.
Therefore, since $c_{pk}=c_{Vk}+R_k$, we can write
\begin{eqnarray*}
 \zeta_k=\Big(1-\frac{c_{Vk}p_k+(c_{Vk}+R_k)p_{*k}}{c_{pk}(p_k+p_{*k})}\Big)\frac{\theta}{p_k}
 \\[1mm]
 =\frac{R_kp_k}{c_{pk}(p_k+p_{*k})}\frac{\theta}{p_k}
 =\frac{1}{c_{pk}r_k},\ \ k=1,2.
\end{eqnarray*}

\par Next, we denote by $\mathcal{F}_1$ and $\mathcal{F}_2$ the first and second fractions on the right in formula \eqref{cs cw frac}.
Representing $c_{sW}^2$ in terms of $c_{pk}r_k$, we obtain the following formulas for the fractions
\begin{eqnarray*}
 \mathcal{F}_1
 :=\frac{1}{\rho c_{sW}^2}=\frac{\alpha_2(\gamma_1-1)c_{p1}r_1+\alpha_1(\gamma_2-1)c_{p2}r_2}
                  {\theta(\gamma_1-1)c_{p1}r_1(\gamma_2-1)c_{p2}r_2},
\\[1mm]
\mathcal{F}_2=\frac{\alpha_1\alpha_2(c_{p1}r_1-c_{p2}r_2)^2}{\rho c_p\theta c_{p1}r_1c_{p2}r_2}.
\end{eqnarray*}
We rewrite the numerator of $\mathcal{F}_2$ as follows
\begin{eqnarray*}
\alpha_1\alpha_2(c_{p1}r_1-c_{p2}r_2)^2=
\alpha_1\alpha_2[(c_{p1}r_1)^2+(c_{p2}r_2)^2]\\[1mm]
+(1-2\alpha_1\alpha_2)c_{p1}r_1c_{p2}r_2
-c_{p1}r_1c_{p2}r_2
\\[1mm]
=(\alpha_2c_{p1}r_1+\alpha_1c_{p2}r_2)(\alpha_1c_{p1}r_1+\alpha_2c_{p2}r_2)-c_{p1}r_1c_{p2}r_2
\end{eqnarray*}
due to the formulas $1-2\alpha_1\alpha_2=(\alpha_1+\alpha_2)^2-2\alpha_1\alpha_2=\alpha_1^2+\alpha_2^2$.

\par Then, applying the formula $\alpha_1c_{p1}r_1+\alpha_2 c_{p2}r_2=\rho c_p$, we obtain
\begin{eqnarray*}
 \mathcal{F}_1+\mathcal{F}_2
 =\frac{1}{\theta(\gamma_1-1)(\gamma_2-1)c_{p1}c_{p2}r_1 r_2}
\\[1mm]
 \times[\alpha_2(\gamma_1-1)c_{p1}r_1+\alpha_1(\gamma_2-1)c_{p2}r_2
\\[1mm]
 +(\gamma_1-1)(\gamma_2-1)(\alpha_2c_{p1}r_1+\alpha_1c_{p2}r_2)]
 -\frac{1}{\rho c_p\theta}
\\[1mm]
 =\frac{1}{\theta(\gamma_1-1)(\gamma_2-1)c_{p1}c_{p2}r_1 r_2}\\[1mm]
 \times[\alpha_2\gamma_2(\gamma_1-1)c_{p1}r_1+\alpha_1\gamma_1(\gamma_2-1)c_{p2}r_2]
 -\frac{1}{\rho c_p\theta}.
\end{eqnarray*}
The formulas $(\gamma_k-1)c_{pk}=\gamma_kR_k$, $k=1,2$, and $c_p=\gamma c_V$
and reduction by $\gamma_1\gamma_2$ lead to the formula
\begin{eqnarray*}
\mathcal{F}_1+\mathcal{F}_2
 =\frac{\alpha_2R_1r_1+\alpha_1R_2r_2}{\theta R_1r_1R_2r_2}-\frac{1}{\gamma c_V\rho\theta}.
\end{eqnarray*}
Due to the first formula \eqref{rk and vek in theta and pk}, one can pass from $R_kr_k$ to $p_k=p$:
\begin{eqnarray*}
\mathcal{F}_1+\mathcal{F}_2
 =\frac{\alpha_2(p+p_{*1})+\alpha_1(p+p_{*2})}{(p+p_{*1})(p+p_{*2})}-\frac{1}{\gamma c_V\rho\theta}
\\[1mm]
 =\frac{\gamma(p+\alpha_1p_{*2}+\alpha_2p_{*1})
 -(p+p_{*1})(p+p_{*2})\frac{1}{c_V\rho\theta}}{\gamma(p+p_{*1})(p+p_{*2})}.
\end{eqnarray*}

\par We need the formulas
\begin{eqnarray*}
(p+\alpha_1p_{*2}+\alpha_2p_{*1})(p+\langle\alpha_kp_{*k}\rangle)\\[1mm]
=p^2+(p_{*1}+p_{*2})p+(\alpha_1p_{*2}+\alpha_2 p_{*1})\langle\alpha_kp_{*k}\rangle,
\\[1mm]
\alpha_1p_{*2}+\alpha_2p_{*1})\langle\alpha_kp_{*k}\rangle-p_{*1}p_{*2}
 \\[1mm]
 =\alpha_1\alpha_2(p_{*1}^2+p_{*2}^2)+(\alpha_1^2+\alpha_2^2-(\alpha_1+\alpha_2)^2)p_{*1}p_{*2}
\\[1mm]
 =\alpha_1\alpha_2(p_{*1}-p_{*2})^2.
\end{eqnarray*}
Applying them together with formula \eqref{sg EOS cons1a}, we have
\begin{eqnarray*}
 (p+p_{*1})(p+p_{*2})\\[1mm]
 =(p+\alpha_1p_{*2}+\alpha_2p_{*1})(p+\langle\alpha_kp_{*k}\rangle)-\alpha_1\alpha_2\Delta_*^2
\\[1mm]
 =(p+\alpha_1p_{*2}+\alpha_2p_{*1})(\gamma-1)c_V\rho\theta-\alpha_1\alpha_2\Delta_*^2,
\end{eqnarray*}
and finally we can represent $\mathcal{F}_1+\mathcal{F}_2$ as
\begin{eqnarray*}
 \mathcal{F}_1+\mathcal{F}_2=\frac{1}{\gamma(p+p_{*1})(p+p_{*2})}
 \\[1mm]
 \times\Big\{\gamma(p+\alpha_1 p_{*2}+\alpha_2p_{*1})
 \\[1mm]
 -\Big[(\gamma-1)(p+\alpha_1p_{*2}+\alpha_2p_{*1})
 -\frac{\alpha_1\alpha_2\Delta_*^2}{c_V\rho\theta}\Big]\Big\}
 \\[1mm]
 =\frac{p-p_-}{\gamma(p+p_{*1})(p+p_{*2})}=\frac{1}{\rho c_s^2},
\end{eqnarray*}
see definition \eqref{form for pp and pm} of $p_-$ and formula \eqref{for speed of sound} for $c_s^2$, with $\sqrt{d}=p_+-p_-$.
Formula \eqref{cs cw frac} is proved.
\end{proof}
\begin{proof}[Proof of Proposition \ref{prop: 3 sq speeds of sound}]
We first notice that
$\rho\sqrt{d}(\gamma-1)c_V\theta=(p_+-p_-)R\rho\theta$.
Therefore, applying also formula \eqref{for speed of sound} for $c_s^2$,
the equality of two numerators in \eqref{form for cs2 aux 2} and definition \eqref{form for pp and pm} of $p_+$, we can write
\begin{eqnarray*}
 \rho\sqrt{d}(c_s^2-\gamma(\gamma-1)c_V\theta)
 \\[1mm]
 =\gamma[R\rho\theta p_+ +p_+(\alpha_1p_{*2}+\alpha_2p_{*1})+p_{*1}p_{*2}
\\[1mm]
 -(p_+-p_-)R\rho\theta]
\\[1mm]
=\gamma[(R\rho\theta+\langle\alpha_kp_{*k}\rangle)(\alpha_1p_{*2}+\alpha_2p_{*1})+p_{*1}p_{*2}+p_-R\rho\theta]
\\[1mm]
=\gamma[R\rho\theta(\alpha_1p_{*2}+\alpha_2p_{*1}+p_-)
\\[1mm]
 -\langle\alpha_kp_{*k}\rangle(\alpha_1p_{*2}+\alpha_2p_{*1})+p_{*1}p_{*2}].
\end{eqnarray*}
Using definition \eqref{form for pp and pm} of $p_-$ and the formula
\begin{eqnarray*}
\langle\alpha_kp_{*k}\rangle(\alpha_1p_{*2}+\alpha_2p_{*1})-p_{*1}p_{*2}\\[1mm]
 =\alpha_1\alpha_2(p_{*1}^2+p_{*2}^2)+(\alpha_1^2+\alpha_2^2-1)p_{*1}p_{*2}
\\[1mm]
 =\alpha_1\alpha_2(p_{*1}-p_{*2})^2,
\end{eqnarray*}
we further derive the first inequality \eqref{ineq for cs}:
\begin{eqnarray*}
 \rho\sqrt{d}(c_s^2-\gamma(\gamma-1)c_V\theta)
 =\gamma\Big(-\frac{R}{c_V}\alpha_1\alpha_2\Delta_*^2-\alpha_1\alpha_2\Delta_*^2\Big)
 \\[1mm]
 =-\gamma^2\alpha_1\alpha_2\Delta_*^2\leq 0.
\end{eqnarray*}

\par The second inequality \eqref{ineq for cs} has recently been proved in Proposition 1 in \cite{ZL23}.
\end{proof}

\section*{References}

\end{document}